\documentclass[10pt, reqno]{amsart}
\usepackage{amsfonts}
\usepackage{amsmath,amssymb,appendix}
\usepackage{mathrsfs}
\usepackage{hyperref}
\usepackage{graphicx}
\usepackage{amsthm}
\usepackage{color}
\usepackage{dsfont}
\usepackage{bbm, dsfont}
\usepackage{cite}
\numberwithin{equation}{section}

\usepackage{hyperref}

\allowdisplaybreaks

\newcommand{\R}{\mathbb{R}}

\newtheorem{theorem}{Theorem}[section]

\newtheorem*{theorem*}{Theorem}

\newtheorem{lemma}[theorem]{Lemma}

\newtheorem{corollary}[theorem]{Corollary}

\newtheorem{proposition}[theorem]{Proposition}

\theoremstyle{definition}
\newtheorem{definition}[theorem]{Definition}

\theoremstyle{remark}

\begin{document}

\title[On dispersive decay for gKdV]{On dispersive decay for the generalized Korteweg--de Vries equation}

\author[M.~Kowalski]{Matthew Kowalski}
\address{Department of Mathematics, University of California, Los Angeles, CA 90095, USA}
\email{mattkowalski@math.ucla.edu}

\author[M.~Shan]{Minjie Shan}
\address{College of Science, Minzu University of China, Beijing 100081, P. R. China}
\email{smj@muc.edu.cn}

\begin{abstract}
    We prove pointwise-in-time dispersive estimates for solutions to the generalized Korteweg--de Vries (gKdV) equation.
    In particular, for solutions to the mass-critical model, we assume only that initial data lie in $\dot{H}^{\frac{1}{4}} \cap \dot{H}^{-\frac{1}{12}}$ and show that solutions decay in $L^\infty$ like $|t|^{-\frac{1}{3}}$. To accomplish this, we develop a persistence of negative regularity for solutions to gKdV and extend Lorentz--Strichartz estimates to the mixed norm case.
\end{abstract}

\maketitle

\section{Introduction}
    \noindent We study dispersive decay for the {\em generalized Korteweg--de Vries equation}:
\begin{equation}\label{gKdV}\tag{gKdV}
	\begin{cases}
		\partial_{t}u +\partial_{x}^{3} u \pm \partial_{x} u^{k+1} = 0, \\
		u(0,x)=u_0(x),  \\
    \end{cases}
\end{equation}
where $u : \R_t \times \R_x \to \R$ is a real-valued function on spacetime and $k \geq 4$ is an integer. With this notation, $+$ corresponds to the focusing equation and $-$ to the defocusing equation. Beyond the global theory, our methods are blind to the focusing nature of \eqref{gKdV}.

Like many dispersive models, \eqref{gKdV} enjoys a scaling symmetry ---
$ u(t, x) \mapsto u_{\lambda}(t, x) = \lambda^{\frac{2}{k}} u ( \lambda^{3}t, \lambda x) $, for $\lambda > 0$ --- that preserves the class of solutions. This scaling symmetry defines a critical regularity $s_k = \frac{1}{2} - \frac{2}{k}$. Specifically, $\dot{H}^{s_k}$ is a dimensionless measure of size for \eqref{gKdV}. Notably, for $k = 4$, $s_k = 0$ and so \eqref{gKdV} is {\em mass-critical}: the conserved mass
\begin{equation*}
    M(u)=\int_{\R}|u(t, x)|^2 dx = \int_\R |u_0|^2 dx
\end{equation*}
is preserved under scaling. As the critical regularity aligns with a conserved quantity, strengthened global results are available when $k = 4$, which we leverage later.

In their influential paper \cite{KPV93}, Kenig, Ponce, and Vega proved that \eqref{gKdV} is globally well-posed and scatters in $\dot{H}^{s_k}(\R)$ for small initial data. In the following theorem, we recall the necessary well-posedness results for \eqref{gKdV} from \cite{KPV93}:
\begin{theorem}[Well-posedness and scattering] \label{KPV-gKdV-Global-kg4}
Let $k\geq4$. Then there exists $\delta_k>0$ such that for any $u_0 \in \dot{H}^{s_k}(\R)$ with
$\|u_0\|_{\dot{H}^{s_k}}<\delta_k$,
there exists a unique global solution $u(t)$ of \eqref{gKdV} with initial datum $u_0$ which satisfies:
	\begin{align}
	u\in C\big(\R; \dot{H}^{s_k}(\R) \big)& \cap L^{\infty}\big(\R; \dot{H}^{s_k}(\R) \big), \nonumber \\
	\left\||\partial_x|^{s_k} u_x \right\|_{L^{\infty}_{x}L^{2}_{t}}\leq C\big(\|u_0\|_{\dot{H}^{s_k}}\big),
\;\;&\;\;	\||\partial_x|^{s_k}u\|_{L^{5}_{x}L^{10}_{t}}\leq C\big(\|u_0\|_{\dot{H}^{s_k}}\big),\label{KPV-gKdV-Global-kg4-A2} \\
		\big\||\partial_x|^{\frac{1}{10}-\frac{2}{5k}}|\partial_t|^{\frac{3}{10}-\frac{6}{5k}}u&\big\|_{L^{p_k}_{x}L^{q_k}_{t}} \leq C\big(\|u_0\|_{\dot{H}^{s_k}}\big), \label{KPV-gKdV-Global-kg4-A3}
	\end{align}
    where $\frac{1}{p_k}=\frac{2}{5k}+\frac{1}{10}$, $\frac{1}{q_k}=\frac{3}{10}-\frac{4}{5k}$.

    Moreover, there exist $u_0^{\pm}\in \dot{H}^{s_k}(\R)$ such that
	\begin{align}\label{KPV-gKdV-GlobalscatteringA4}
		\lim_{t\to \pm \infty}\left\|u(t)- e^{-t\partial^3_x}u_0^{\pm}\right\|_{\dot{H}^{s_k}_{x}}=0.
	\end{align}
    In this way, we say that the global strong solution $u(t)$ scatters in $\dot{H}^{s_k}(\R)$ to a solution of the linear equation as $t\to \pm \infty$.
\end{theorem}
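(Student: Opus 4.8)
The plan is to solve \eqref{gKdV} by a contraction-mapping argument on the Duhamel formulation
\[ u(t) = e^{-t\partial_x^3} u_0 \mp \int_0^t e^{-(t-s)\partial_x^3}\,\partial_x\big(u^{k+1}\big)(s)\,ds, \]
working in a complete metric space $X$ whose norm is built precisely from the quantities controlled in \eqref{KPV-gKdV-Global-kg4-A2}--\eqref{KPV-gKdV-Global-kg4-A3}. Concretely, writing $\alpha = \tfrac{1}{10}-\tfrac{2}{5k}$ and $\beta = \tfrac{3}{10}-\tfrac{6}{5k}$, I would set
\[ \|u\|_X = \|u\|_{L^\infty_t \dot{H}^{s_k}_x} + \big\||\partial_x|^{s_k} u_x\big\|_{L^\infty_x L^2_t} + \big\||\partial_x|^{s_k} u\big\|_{L^5_x L^{10}_t} + \big\||\partial_x|^{\alpha}|\partial_t|^{\beta} u\big\|_{L^{p_k}_x L^{q_k}_t}, \]
and show that for $\|u_0\|_{\dot H^{s_k}} < \delta_k$ the Duhamel map $\Phi$ carries a ball $B_\rho\subset X$ of radius $\rho\sim\|u_0\|_{\dot H^{s_k}}$ into itself and is a contraction there. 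The scale invariance of every norm appearing in $\|\cdot\|_X$ under $u\mapsto\lambda^{2/k}u(\lambda^3 t,\lambda x)$ is exactly what makes the argument close at the critical regularity, with a smallness threshold $\delta_k$ depending only on $k$.

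The linear half of the argument rests on the classical dispersive estimates for the Airy propagator $e^{-t\partial_x^3}$: the sharp Kato local-smoothing bound $\|\partial_x e^{-t\partial_x^3}f\|_{L^\infty_x L^2_t}\lesssim\|f\|_{L^2}$, the maximal-function and Strichartz estimates that feed the $L^5_x L^{10}_t$ and $L^{p_k}_x L^{q_k}_t$ norms, and their retarded (inhomogeneous) counterparts obtained by pairing smoothing with a $TT^*$ duality argument. After applying $|\partial_x|^{s_k}$ to the Duhamel term, moving the outer $\partial_x$ of the nonlinearity onto the propagator lets the smoothing estimate absorb exactly one derivative, so that the Duhamel contribution to each component of $\|\cdot\|_X$ is controlled by a suitable dual space-time norm of $|\partial_x|^{s_k}\big(u^{k+1}\big)$.

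The heart of the proof is the nonlinear estimate bounding $\big\||\partial_x|^{s_k}\big(u^{k+1}\big)\big\|$ in the appropriate dual mixed norm by $\|u\|_X^{k+1}$. I would distribute the fractional derivative using the Kato--Ponce fractional Leibniz rule together with the fractional chain rule (valid since $0\le s_k<\tfrac12$), placing one factor in the smoothing or Strichartz norm and the remaining $k$ factors in maximal-type $L^{q}_x L^\infty_t$ norms controlled through the mixed $L^{p_k}_x L^{q_k}_t$ quantity via Sobolev embedding in time, and then closing with H\"older in the spacetime variables. The delicate point, and the main obstacle, is the exponent bookkeeping: one must check that the H\"older exponents across the mixed $L^p_x L^q_t$ norms sum correctly, that each of the $k+1$ factors carries exactly the dimensionless $\dot H^{s_k}$ strength, and that the fractional-derivative counts produced by the chain rule stay admissible for the linear estimates. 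This is precisely where the restriction $k\geq 4$ (equivalently $s_k\geq 0$) enters, guaranteeing enough integrability to absorb the $k$ maximal factors.

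Once the contraction is established, uniqueness and $u\in C(\R;\dot H^{s_k})$ follow from the fixed-point construction and the strong continuity of $t\mapsto e^{-t\partial_x^3}u_0$, combined with dominated convergence on the Duhamel integral. For scattering I would define $u_0^\pm = u_0 \mp \int_0^{\pm\infty} e^{s\partial_x^3}\,\partial_x\big(u^{k+1}\big)(s)\,ds$, whose convergence in $\dot H^{s_k}$ is just the $t\to\pm\infty$ tail of the nonlinear estimate already proved; then
\[ \big\|u(t)-e^{-t\partial_x^3}u_0^\pm\big\|_{\dot H^{s_k}} = \Big\|\int_t^{\pm\infty} e^{-(t-s)\partial_x^3}\,\partial_x\big(u^{k+1}\big)\,ds\Big\|_{\dot H^{s_k}} \to 0 \]
as $t\to\pm\infty$, by applying the same estimate on the shrinking time interval, which yields \eqref{KPV-gKdV-GlobalscatteringA4}.
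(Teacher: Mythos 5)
Your proposal is correct and follows essentially the same route as the source: the paper does not prove this theorem itself but quotes it from Kenig--Ponce--Vega \cite{KPV93}, whose argument is precisely the contraction mapping you describe, run in a norm built from the smoothing, maximal-function, and mixed Strichartz quantities, with the fractional Leibniz/chain rules handling $|\partial_x|^{s_k}(u^{k+1})$ and scattering extracted from the convergence of the Duhamel tail. No gaps to flag beyond the exponent bookkeeping you already identify as the delicate point.
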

We note that the scattering result \eqref{KPV-gKdV-GlobalscatteringA4} for $s_k \neq 0$ is not stated in the main theorems of \cite{KPV93}, but rather, is remarked on  in \cite[\S1]{KPV93}.

For the defocusing mass-critical equation, Dodson extended global well-posedness and scattering to arbitrary initial data $u_0 \in L^2(\R)$ with the global spacetime bound \cite{Dodson2017}:
\begin{align}\label{gKdV-Dodson}
    \| u \|_{L^{5}_{x}L^{10}_{t}(\R\times\R)}\leq C\big(M(u_0)\big). 
\end{align}
For future analysis, we note that this aligns with estimate \eqref{KPV-gKdV-Global-kg4-A3}.

Scattering \eqref{KPV-gKdV-GlobalscatteringA4} indicates that global solutions of \eqref{gKdV} asymptotically parallel those of the Airy evolution.
It is therefore natural to compare the long-time behavior of solutions to \eqref{gKdV} with those of the Airy equation; indeed, both satisfy the spacetime bounds \eqref{KPV-gKdV-Global-kg4-A2}.
In particular, the Airy evolution is well known to exhibit a pointwise quantitative decay,
\begin{align}\label{jcKdV-DisDecay}
    \big\|e^{-t\partial^3_x}f\big\|_{L^\infty} \lesssim  |t|^{-\frac{1}{3}} \|f\|_{L^1},
\end{align}
which raises the question of whether this decay persists for solutions to \eqref{gKdV} in spite of the nonlinearity, through scattering.

In our main theorem, we answer this question and show an analog of \eqref{jcKdV-DisDecay} for solutions to \eqref{gKdV}:
\begin{theorem}\label{theorem}
    Fix $4 \leq k < 8$. Given $u_0 \in L^{1}\cap\dot{H}^{\frac{1}{4}}(\R)\cap \dot{H}^{\frac{1}{4} - \frac{8-k}{4(k-1)}}$ which satisfies the hypotheses for global existence for \eqref{gKdV}, let $u(t)$ denote the unique global solution to \eqref{gKdV} with initial datum $u_0$. Then
    \begin{equation}\label{theorem/decay-1}
        \|u(t)\|_{L_x^\infty} \leq C\big(\|u_0\|_{\dot{H}^{\frac{1}{4}}}, \|u_0\|_{\dot{H}^{\frac{1}{4}-\frac{8-k}{4(k-1)}}}\big) |t|^{-\frac{1}{3}} \|u_0\|_{L^{1}},
    \end{equation}
    uniformly for $t \neq 0$.
    In addition, for $k \geq 8$, we find
    \begin{equation}\label{theorem/decay-2}
        \|u(t)\|_{L_x^\infty} \leq C\big(\|u_0\|_{\dot{H}^{s_k}}\big) |t|^{-\frac{1}{3}} \|u_0\|_{L^{1}},
    \end{equation}
    uniformly for $t \neq 0$.
\end{theorem}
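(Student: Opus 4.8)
The plan is to transfer the linear Airy decay \eqref{jcKdV-DisDecay} to the nonlinear flow through Duhamel's formula and a bootstrap. Writing
\[
u(t) = e^{-t\partial_x^3}u_0 \mp \int_0^t e^{-(t-s)\partial_x^3}\partial_x\big(u^{k+1}\big)(s)\,ds,
\]
the free evolution satisfies $\|e^{-t\partial_x^3}u_0\|_{L^\infty_x}\lesssim|t|^{-\frac13}\|u_0\|_{L^1}$ directly, so the entire problem reduces to the Duhamel integral. I would control the solution through the quantity $A:=\sup_{t\neq0}|t|^{\frac13}\|u(t)\|_{L^\infty_x}$, together with the global Strichartz norms furnished by Theorem \ref{KPV-gKdV-Global-kg4} (in particular the $L^5_xL^{10}_t$ bound, or \eqref{gKdV-Dodson} when $k=4$) and the two Sobolev norms $\dot H^{\frac14}$ and $\dot H^{\frac14-\frac{8-k}{4(k-1)}}$, which are held fixed and absorbed into the constant $C(\cdot)$.

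The first technical ingredient is a family of dispersive estimates that accommodate the derivative in $\partial_x(u^{k+1})$. By scaling, $\big\||\partial_x|^a e^{-t\partial_x^3}f\big\|_{L^\infty_x}\lesssim|t|^{-\frac{1+a}{3}}\|f\|_{L^1}$, and interpolating this against the isometry $\big\||\partial_x|^a e^{-t\partial_x^3}f\big\|_{L^2}=\|f\|_{\dot H^a}$ yields $L^p$ (indeed Lorentz) dispersive bounds carrying a fractional derivative. Frequency-localizing the nonlinearity, I would route low frequencies through the $L^1$-type endpoint --- which produces decay but demands negative-regularity control of $u^{k+1}$ --- and high frequencies through the $\dot H^a$ endpoint, controlled by $\dot H^{\frac14}$ and the $L^5_xL^{10}_t$ Strichartz norm. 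Extracting exactly one factor $\|u(s)\|_{L^\infty_x}\le A|s|^{-\frac13}$ from the $k+1$ copies of $u$ and estimating the remaining $k$ factors by the fixed global Strichartz norms keeps the nonlinear contribution linear in $A$, which is what ultimately produces a bound linear in $\|u_0\|_{L^1}$ after absorption.

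The time integral that results is a convolution against a kernel $|t-s|^{-\alpha}$ with $\alpha$ sitting at the endpoint where ordinary $L^p_t$-Young and H\"older estimates just fail; this is precisely where the advertised extension of the Lorentz--Strichartz estimates to the mixed-norm setting enters, treating $|t-\cdot|^{-\alpha}\in L^{1/\alpha,\infty}_t$ and applying Young's inequality (equivalently Hardy--Littlewood--Sobolev) in Lorentz spaces. In tandem I would establish the persistence of negative regularity: by running multilinear estimates on the Duhamel term I would show that the $\dot H^{\frac14-\frac{8-k}{4(k-1)}}$ norm of $u(t)$ remains bounded uniformly in time, and it is this bound that governs the low-frequency output of the nonlinearity, where the crude $L^1$ estimate is too lossy. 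For $k\ge 8$ the negative-regularity exponent $\frac14-\frac{8-k}{4(k-1)}$ meets or exceeds $s_k$ and the nonlinearity is milder, so the single norm $\dot H^{s_k}$ controls both frequency regimes and \eqref{theorem/decay-2} follows from the same scheme without a separate persistence step.

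I expect the main obstacle to be the two endpoints of this scheme closing simultaneously: the near-diagonal $s\to t$ integrability after paying for the derivative, and the large-$s$/low-frequency decay, both of which are genuinely borderline at the Lebesgue level and are the reason the Lorentz refinements are forced upon us. The persistence of negative regularity is the other delicate point, since neither $L^1$ nor negative Sobolev regularity is preserved by the gKdV flow, and both must be recovered by hand from the multilinear structure. To close, I would subdivide $\R$ into finitely many intervals on which the global $L^5_xL^{10}_t$ norm is small --- possible because it is globally finite by Theorem \ref{KPV-gKdV-Global-kg4} and \eqref{gKdV-Dodson} --- so that on each interval the nonlinear term carries a small coefficient times $A$ and can be absorbed, and then patch the resulting pointwise decay bounds together.
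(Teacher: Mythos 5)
Your overall architecture coincides with the paper's: Duhamel's formula, a bootstrap on $\sup_t |t|^{1/3}\|u(t)\|_{L^\infty_x}$, extraction of a single $L^\infty_x$ factor to keep the estimate linear in that quantity, Lorentz-space H\"older to handle the borderline time convolution, a subdivision of $\R_t$ into finitely many intervals of small Strichartz norm, and a separately proved persistence of negative regularity feeding the low-regularity spacetime bounds. The one genuine divergence is the crux of the argument, namely how the derivative in $\partial_x(u^{k+1})$ is paid for, and here your proposal is both different from the paper and under-specified. You propose frequency-localizing the nonlinearity and routing low frequencies through an $L^1\to L^\infty$ dispersive endpoint and high frequencies through the $\dot H^a$ isometry. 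Note that the $\dot H^a$ endpoint produces no time decay at all, and the naive $L^1$ endpoint with the full derivative, $\||\partial_x| e^{-(t-s)\partial_x^3}f\|_{L^\infty}\lesssim |t-s|^{-2/3}\|f\|_{L^1}$, convolved against the $|s|^{-1/3}$ coming from the extracted $L^\infty$ factor gives $\int_0^t (t-s)^{-2/3}s^{-1/3}\,ds \sim 1$, i.e.\ no decay, so all of the required $|t|^{-1/3}$ must then be wrung out of the $s$-integrability of the remaining $k$ factors; you do not verify that the frequency threshold, the two endpoints, and the Lorentz exponents can be chosen so that both regimes close simultaneously. The paper sidesteps this entirely with a single device: the adapted dispersive estimate $\|\partial_x e^{-t\partial_x^3}f\|_{L^\infty_x}\lesssim |t|^{-1/2}\||\partial_x|^{1/2}f\|_{L^1_x}$ (estimate \eqref{prel/dispersive-adapted}), which splits the derivative evenly between time decay and regularity of the nonlinearity, followed by the endpoint fractional Leibniz rule (Proposition \ref{prel/leibniz}) to distribute $|\partial_x|^{1/2}$ onto one factor of $u$ measured in $L_t^{4/\theta}L_x^{2/(1-\theta)}$; the exponents $|t-s|^{-1/2}$, $|s|^{-1/3}$, and $L_s^{2,1}$ then fit together exactly after splitting $[0,t)$ at $t/2$. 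You should either adopt this device or carry out the frequency-localized numerology explicitly; as written, that step is a gap. A second, smaller discrepancy: the negative regularity in the paper is not used to control a low-frequency piece of the nonlinearity but to obtain the Strichartz bound on $\|u\|_{L_t^{q_\theta,r_\theta}L_x^{p_\theta}}$ (no derivatives, $p_\theta$ below the scaling-natural exponent), which by the Strichartz numerology costs $\dot H^{s}$ with $s=\frac{1}{4}-\frac{8-k}{4(k-1)}-\frac14<0$ when $k<8$; your rationale lands on the same hypothesis but by a different (and, in your scheme, unproved) mechanism.
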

This result is optimal for $k \geq 8$ as it assumes no requirements on the initial data beyond belonging to the scaling-critical $\dot{H}^{s_k}$ and those required for global existence. For the case $4 \leq k < 8$, we must make additional assumptions on the regularity of the initial data. In particular, for the mass-critical \eqref{gKdV} ($k = 4$, $s_k = 0$), we find that
\begin{equation*}\label{theorem/decay}
    \|u(t)\|_{L_x^\infty} \leq C\big(\|u_0\|_{\dot{H}^{\frac{1}{4}}}, \|u_0\|_{\dot{H}^{-\frac{1}{12}}}\big) |t|^{-\frac{1}{3}} \|u_0\|_{L^{1}},
\end{equation*}
uniformly for $t \neq 0$ and for all $u_0 \in \dot{H}^{\frac{1}{4}}\cap \dot{H}^{-\frac{1}{12}}$. The condition that $u_0 \in \dot{H}^{-\frac{1}{12}}$ requires that we develop a persistence of negative regularity for \eqref{gKdV}; see Proposition \ref{spacetime/theorem}.

For $4 \leq k < 8$, we note that $L^1 \cap \dot{H}^{\frac{1}{4}} \hookrightarrow \dot{H}^{\frac{1}{4} - \frac{8-k}{4(k-1)}}$ and so Theorem \ref{theorem} can be restated without the dependence on $\dot{H}^{\frac{1}{4} - \frac{8-k}{4(k-1)}}$. In doing so, the linear dependence on $\|u_0\|_{L^1}$ is lost.
\begin{corollary}
    Fix $4 \leq k < 8$. Given $u_0 \in L^{1}\cap\dot{H}^{\frac{1}{4}}(\R)$ which satisfies the hypotheses for global existence for \eqref{gKdV}, let $u(t)$ denote the unique global solution to \eqref{gKdV} with initial datum $u_0$. Then
    \begin{equation*}
        \|u(t)\|_{L_x^\infty} \leq C\big(\|u_0\|_{L^1}, \|u_0\|_{\dot{H}^{\frac{1}{4}}}\big) |t|^{-\frac{1}{3}},
    \end{equation*}
    uniformly for $t \neq 0$.
\end{corollary}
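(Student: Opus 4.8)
The plan is to deduce this Corollary directly from Theorem \ref{theorem} by removing the explicit dependence on $\dot H^{s}$, where $s := \frac14 - \frac{8-k}{4(k-1)}$, via the interpolation embedding $L^1\cap \dot H^{\frac14}\hookrightarrow \dot H^{s}$ advertised in the preceding remark. The one analytic ingredient is therefore the inequality
\begin{equation}\label{cor/interp}
    \|u_0\|_{\dot H^{s}}\lesssim \|u_0\|_{L^1}^{1-\theta}\,\|u_0\|_{\dot H^{\frac14}}^{\theta},\qquad \theta=\tfrac{2}{3}(2s+1),
\end{equation}
which I would establish on the Fourier side. First I would record that $s$ ranges over $[-\tfrac1{12},\tfrac14)\subset(-\tfrac12,\tfrac14)$ as $k$ runs over $[4,8)$, since $\frac{8-k}{4(k-1)}$ decreases from $\tfrac13$ to $0$; the two strict inequalities $s>-\tfrac12$ and $s<\tfrac14$ are precisely what make the splitting below converge, and they also force the interpolation weight $\theta=\tfrac23(2s+1)\in[\tfrac59,1)$ to lie in $(0,1)$.

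To prove \eqref{cor/interp} I would use the bound $\|\wh{u_0}\|_{L^\infty}\le \|u_0\|_{L^1}$ on low frequencies and the $\dot H^{\frac14}$ norm on high frequencies. Splitting the Plancherel integral at a threshold $|\xi|=R$ gives
\begin{equation*}
    \|u_0\|_{\dot H^{s}}^2=\int_\R |\xi|^{2s}|\wh{u_0}(\xi)|^2\,d\xi \lesssim R^{2s+1}\,\|u_0\|_{L^1}^2 + R^{2s-\frac12}\,\|u_0\|_{\dot H^{\frac14}}^2,
\end{equation*}
where the low-frequency term uses $\int_{|\xi|\le R}|\xi|^{2s}\,d\xi\sim R^{2s+1}$ (finite precisely because $2s+1>0$) and the high-frequency term factors $|\xi|^{2s}=|\xi|^{2s-\frac12}\cdot|\xi|^{\frac12}$ and bounds $|\xi|^{2s-\frac12}\le R^{2s-\frac12}$ on $\{|\xi|>R\}$ (valid because $2s-\tfrac12<0$). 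Optimizing in $R$—the first term increasing and the second decreasing—at $R\sim(\|u_0\|_{\dot H^{\frac14}}/\|u_0\|_{L^1})^{4/3}$ balances the two contributions and yields \eqref{cor/interp}.

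With \eqref{cor/interp} in hand, I would simply invoke estimate \eqref{theorem/decay-1} of Theorem \ref{theorem}, which applies since $u_0\in L^1\cap \dot H^{\frac14}\subset \dot H^{s}$. The constant there, $C\big(\|u_0\|_{\dot H^{\frac14}},\|u_0\|_{\dot H^{s}}\big)$, becomes a function of $\|u_0\|_{L^1}$ and $\|u_0\|_{\dot H^{\frac14}}$ alone after substituting \eqref{cor/interp}; absorbing the remaining explicit factor $\|u_0\|_{L^1}$ into this constant produces the claimed bound and, as noted in the remark, forfeits the linear dependence on $\|u_0\|_{L^1}$. There is no genuine obstacle here: the entire content of the Corollary is the soft interpolation step \eqref{cor/interp}, while the substantive analysis is already carried out in Theorem \ref{theorem}.
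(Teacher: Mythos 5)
Your proposal is correct and follows exactly the route the paper intends: the corollary is deduced from estimate \eqref{theorem/decay-1} of Theorem \ref{theorem} via the embedding $L^1\cap\dot H^{\frac14}\hookrightarrow\dot H^{\frac14-\frac{8-k}{4(k-1)}}$ stated in the remark immediately preceding it, and your frequency-splitting verification of that embedding (with the correct checks $s>-\tfrac12$ and $s<\tfrac14$) supplies the only detail the paper leaves implicit. No gaps.
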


Estimates of this form have received substantial attention in the study of dispersive equations. Indeed, before the introduction of Strichartz estimates, quantitative estimates pointwise-in-time were the primary method for understanding long-time behavior. Consequently, they were originally achieved only under strong regularity and decay assumptions; see, e.g., \cite{LinStr78, Klai85, KlaiPon83, MorStr72} and references therein.

In recent years, there has been a return to these estimates with modern scaling-critical global well-posedness results, such as those obtained by Dodson \cite{Dodson2017}. 
In \cite{FKVZ24}, Fan, Killip, Vi\c{s}an and Zhao proved dispersive decay for global solutions to the {\em mass-critical nonlinear Schr\"odinger equation}, assuming only that initial data belonged to the scaling-critical $L^2$.
We believe their work is optimal: no assumptions are made on the initial besides belonging to the critical space, a linear dependence on the initial data is recovered, and the constants depend only on the size of the initial data.

These results have since been expanded: In \cite{Kowal24, Kowal25}, the first author obtained pointwise-in-time dispersive decay for solutions to the {\em energy-critical nonlinear Schr\"odinger equation} and the {\em energy-critical nonlinear wave equation} with initial data in $\dot{H}^1$ and the Besov space $\dot{B}^1_{2,1}$. In \cite{Shan24}, the second author proved dispersive decay for solutions to \eqref{gKdV} for suitably small initial data in $H^{\frac{1}{2}}$, as well as for solutions to the {\em generalized Zakharov--Kuznetsov} under suitable conditions on the initial data. In this work, we build on \cite{Shan24} and sharpen the requirements for dispersive decay for \eqref{gKdV}.
    \subsection*{Notation}
        We use the usual notation $A \lesssim B$ to indicate that $A \leq C B$ for some universal constant $C > 0$ that will change from line to line. If both $A \lesssim B$ and $B \lesssim A$ then we use the notation $A \sim B$. When the implied constant fails to be universal, the relevant dependencies will be indicated within the text or included as subscripts on the symbol. 

For notation purposes, we abbreviate the maximum and minimum of two numbers $a$ and $b$ as $a \vee b$ and $a \wedge b$ respectively.

We define the Fourier transform as
\begin{equation*}
\widehat{f}(\xi) = \tfrac{1}{\sqrt{2\pi}}\int e^{-i \xi x} f(x) dx \quad \text{so} \quad f(x) = \tfrac{1}{\sqrt{2\pi}} \int e^{i \xi x} \widehat{f}(\xi) d\xi.
\end{equation*}
Concomitant with this, for $s \in \R$, we define the homogeneous Sobolev space $\dot{H}^s$ as the completion of the Schwartz functions $\mathcal{S}(\R)$ with respect to the norm
\begin{equation*}
    \|f\|^2_{\dot{H}^s(\R)} = \int_\R |\xi|^{2s} |\widehat{f}(\xi)|^2 d\xi.
\end{equation*}
When $s < -\frac{1}{2}$, we restrict to Schwartz functions that vanish at $\xi = 0$. 

We use $L_t^p L_x^q(T \times X)$ to denote the {\em forward} mixed Lebesgue spacetime norm
\begin{equation*}
    \|f\|_{L_t^p L_x^q(T\times X)} = \big\| \|f(t,x)\|_{L^q(X,dx)} \big\|_{L^p(T,dt)} = \bigg[ \int_T \bigg(\int_X |f(t,x)|^q dx\bigg)^{\frac{p}{q}} dt\bigg]^{\frac{1}{p}}.
\end{equation*}
Similarly, we define the {\em backward} mixed Lebesgue spacetime norm $L_x^q L_t^p$ in the obvious way. When $X = \R$, we let $L_t^pL_x^q(T) = L_t^pL_x^q(T \times \R)$,
and when $T = X = \R$, we let $L_t^pL_x^q = L_t^pL_x^q(T \times \R)$, unless otherwise stated. Similarly for backwards norms, we define $L_x^qL_t^p(T) = L_x^qL_t^p(\R \times T)$. For our purposes, the spatial norm will always be taken over $X = \R$. Finally, when $p = q$, we let $L^p_{t,x} = L_t^p L_x^p = L^p_x L^p_t$.

\subsection*{Acknowledgments}
M.K.~is supported, in part, by NSF grants DMS-2154022, DMS-2054194, and DMS-2348018. M.S.~is partially supported by the National Natural Science Foundation of China, grant numbers: 12101629 and 12371123.

\section{Preliminaries}\label{section:linear estimate}
    In this section, we recall the harmonic analysis results and the Airy evolution estimates which will be necessary for our proof of Theorem \ref{theorem}.
\subsection{Lorentz spaces}
We define Lorentz spaces in the following way. For a textbook treatment of these spaces, we direct the interested reader to \cite{Grafa14}.
    
    \begin{definition}[Lorentz spaces] Fix $1\leq p<\infty$ and $1\leq q\leq\infty$. The Lorentz space $L^{p,q}(\R^d)$ is the space of measurable functions $f:\R^d \to \mathbb{C}$ which have finite quasinorm:
    \begin{equation}\label{prel/quasinorm}
        \|f\|_{L^{p,q}(\R^d)}=p^{\frac{1}{q}}\left\| \lambda \big| \{x\in \R^d: |f(x)|>\lambda \}\big|^{\frac{1}{p}} \right\|_{L^{q}((0, \infty), \frac{d\lambda}{\lambda})},
    \end{equation}
    where $|*|$ denotes the Lebesgue measure on $\R^d$.
    \end{definition}
    
    We note that $L^{p,p}(\R^d)$ coincides with the usual Lebesgue space $L^{p}(\R^d)$, and that $L^{p,\infty}(\R^d)$ coincides with the weak-$L^{p}(\R^d)$ space. In addition, the Lorentz spaces satisfy a natural nesting: if $1\leq q<r\leq\infty$, then $L^{p,q}(\R^d) \hookrightarrow  L^{p,r}(\R^d)$.

    Central to our analysis is the observation:
    \begin{equation*}
        \big\||t|^{-\frac{1}{p}}\big\|_{L_t^{p,\infty}(\R)} \sim 1,
    \end{equation*}
    uniformly in $p$. Indeed, this observation is the reason we introduce Lorentz norms in our analysis and is the full extent to which we will use the exact form of \eqref{prel/quasinorm}.
    
    Lorentz spaces enjoy many of the same estimates as Lebesgue spaces. In particular, we find the following forms of H\"older's inequality and Young's inequality, here called the Young--O'Neil convolutional inequality \cite{young-oneil-proof,lorentz-strichartz,young-oneil-introduction}:
    \begin{lemma}[H\"older's inequality] Let $1\leq p,p_1,p_2<\infty$, $1\leq q,q_1,q_2\leq\infty$, and $\frac{1}{p}=\frac{1}{p_1}+\frac{1}{p_2}$, $\frac{1}{q}=\frac{1}{q_1}+\frac{1}{q_2}$. Then,
    $$\|fg\|_{L^{p,q}(\R^d)}\lesssim \|f\|_{L^{p_1,q_1}(\R^d)}\|g\|_{L^{p_2,q_2}(\R^d)}.$$
    \end{lemma}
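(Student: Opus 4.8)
The plan is to pass from the distribution-function formulation of the Lorentz quasinorm in \eqref{prel/quasinorm} to the equivalent formulation in terms of the decreasing rearrangement, reduce the product estimate to a pointwise (in the rearrangement variable) bound, and then apply the ordinary H\"older inequality in a one-dimensional measure space.

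First I would recall the decreasing rearrangement $f^*(t) = \inf\{\lambda > 0 : d_f(\lambda) \le t\}$, where $d_f(\lambda) = |\{x : |f(x)| > \lambda\}|$ is the distribution function. Since $f^*$ is the generalized inverse of the nonincreasing function $d_f$, a layer-cake change of variables shows that the quasinorm in \eqref{prel/quasinorm} equals $\|t^{1/p} f^*(t)\|_{L^q((0,\infty),\, dt/t)}$ up to the normalizing constant; in particular the two formulations are equivalent, which is the form I will use throughout. This is classical; see \cite{Grafa14}.

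The key structural input is the submultiplicativity of the rearrangement, namely $(fg)^*(t) \le f^*(t/2)\, g^*(t/2)$. This follows from the set inclusion $\{|fg| > \lambda_1\lambda_2\} \subseteq \{|f| > \lambda_1\} \cup \{|g| > \lambda_2\}$, which yields $d_{fg}(\lambda_1\lambda_2) \le d_f(\lambda_1) + d_g(\lambda_2)$ and hence, after taking rearrangements, $(fg)^*(t_1 + t_2) \le f^*(t_1)\,g^*(t_2)$; setting $t_1 = t_2 = t/2$ gives the claim.

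With these two facts, the proof is a direct computation. Writing the quasinorm of $fg$ in rearrangement form, inserting the pointwise bound, and splitting the weight as $t^{1/p} = t^{1/p_1} t^{1/p_2}$, I apply the ordinary H\"older inequality on $((0,\infty),\, dt/t)$ with the exponent relation $\frac{1}{q} = \frac{1}{q_1} + \frac{1}{q_2}$:
\begin{align*}
\|fg\|_{L^{p,q}(\R^d)} &\lesssim \big\|t^{1/p} f^*(t/2)\, g^*(t/2)\big\|_{L^q(dt/t)} \\
&\le \big\|t^{1/p_1} f^*(t/2)\big\|_{L^{q_1}(dt/t)}\, \big\|t^{1/p_2} g^*(t/2)\big\|_{L^{q_2}(dt/t)}.
\end{align*}
A change of variables $s = t/2$ in each factor produces harmless constants $2^{1/p_1}$ and $2^{1/p_2}$ and identifies the two factors with $\|f\|_{L^{p_1,q_1}(\R^d)}$ and $\|g\|_{L^{p_2,q_2}(\R^d)}$, completing the proof. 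The only points demanding care are the endpoints $q_i = \infty$, where the $L^{q_i}(dt/t)$ quantity is an essential supremum and H\"older degenerates to the trivial bound $\sup(ab) \le (\sup a)(\sup b)$; all steps persist under this convention. I expect the main (minor) obstacle to be purely bookkeeping --- verifying the equivalence of the two quasinorm formulations and tracking the dimensional constants --- rather than any genuine analytic difficulty, since the estimate is classical.
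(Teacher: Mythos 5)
Your proof is correct. The paper itself offers no proof of this lemma --- it is stated as background with citations to the literature --- so there is nothing internal to compare against; your argument is the standard one (essentially O'Neil's). The two ingredients you isolate are exactly right: the identity $\|f\|_{L^{p,q}} = \big\|t^{1/p}f^*(t)\big\|_{L^q((0,\infty),dt/t)}$ (and indeed the normalization $p^{1/q}$ in \eqref{prel/quasinorm} makes this an exact equality, via the Fubini argument using $\mathds{1}_{\{t< d_f(\lambda)\}}=\mathds{1}_{\{\lambda<f^*(t)\}}$ a.e.), and the submultiplicativity $(fg)^*(t_1+t_2)\le f^*(t_1)g^*(t_2)$, whose derivation from $d_{fg}(\lambda_1\lambda_2)\le d_f(\lambda_1)+d_g(\lambda_2)$ together with $d_f(f^*(t))\le t$ you state correctly. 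After that, splitting $t^{1/p}=t^{1/p_1}t^{1/p_2}$ and applying classical H\"older on $((0,\infty),dt/t)$ with $\frac1q=\frac1{q_1}+\frac1{q_2}$ (including the degenerate case $q_i=\infty$) closes the argument, with the factors $2^{1/p_i}$ and the mismatched normalizing constants harmlessly absorbed into the implicit constant, which is all the $\lesssim$ in the statement requires.
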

    
    \begin{lemma}[Young--O'Neil convolutional inequality]
        Given $1 < p,p_1,p_2 < \infty$ and $1 \leq q, q_1,q_2 \leq \infty$ such that
        $\frac{1}{p} + 1 = \frac{1}{p_1} + \frac{1}{p_2}$ and $\frac{1}{q} = \frac{1}{q_1} + \frac{1}{q_2}$,
        \begin{equation*}
            \|f*g\|_{L^{p,q}} \lesssim_{d,p_i,q_i} \|f\|_{L^{p_1,q_1}} \|g\|_{L^{p_2,q_2}}.
        \end{equation*}
    \end{lemma}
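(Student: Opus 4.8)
The plan is to reduce the bilinear estimate to one-dimensional Hardy and H\"older inequalities on $(0,\infty)$ via decreasing rearrangements, following O'Neil. Write $f^*$ for the decreasing rearrangement of $f$ and $f^{**}(t) = \tfrac{1}{t}\int_0^t f^*(s)\,ds$ for its Hardy maximal function. The first step is to record the standard equivalent description of the Lorentz quasinorm,
\[
    \|f\|_{L^{p,q}} \sim \big\| t^{\frac{1}{p}} f^*(t) \big\|_{L^q((0,\infty),\,\frac{dt}{t})} \sim \big\| t^{\frac{1}{p}} f^{**}(t) \big\|_{L^q((0,\infty),\,\frac{dt}{t})},
\]
valid for $1 < p < \infty$; the second equivalence is precisely Hardy's inequality and is the mechanism by which the hypothesis $1 < p, p_1, p_2 < \infty$ enters. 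Working with the monotone quantity $f^{**}$ in place of $f^*$ is what makes the subsequent estimates clean.

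The heart of the argument is O'Neil's pointwise bound for the rearrangement of a convolution,
\[
    (f*g)^{**}(t) \;\leq\; t\, f^{**}(t)\, g^{**}(t) \;+\; \int_t^\infty f^*(s)\, g^*(s)\,ds .
\]
I would establish this by the layer-cake device: split $g$ according to whether $g^*$ exceeds $g^*(t)$, bound the convolution against the ``tall'' part of $g$ using the elementary $\|h_1 * h_2\|_\infty \le \|h_1\|_1 \|h_2\|_\infty$ estimate and against the remaining part using $\|h_1\|_\infty\|h_2\|_1$, and then reorganize the two contributions in terms of $f^*$, $g^*$, and their integrals. This is routine but is the one genuinely computational step.

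With the pointwise bound in hand, I insert it into the $f^{**}$-description of $\|f*g\|_{L^{p,q}}$ (using the trivial bound $(f*g)^* \le (f*g)^{**}$) and treat the two terms separately. For the first term, the scaling relation $\frac1p + 1 = \frac1{p_1}+\frac1{p_2}$ gives the factorization $t^{\frac1p}\cdot t\, f^{**}(t) g^{**}(t) = \big(t^{\frac{1}{p_1}} f^{**}(t)\big)\big(t^{\frac{1}{p_2}} g^{**}(t)\big)$, so H\"older's inequality on $((0,\infty),\frac{dt}{t})$ with $\frac1q = \frac1{q_1}+\frac1{q_2}$ followed by the norm equivalence above bounds it by $\|f\|_{L^{p_1,q_1}}\|g\|_{L^{p_2,q_2}}$. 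For the second term, I apply the backward Hardy inequality in $t$ (which requires $\frac1p > 0$, i.e.\ $p < \infty$) to absorb the weight $t^{1/p}$ into the integrand, again producing the product $\big(s^{1/p_1}f^*(s)\big)\big(s^{1/p_2}g^*(s)\big)$ and closing with H\"older. The main obstacle is the exponent bookkeeping in these final steps: one must verify that both the forward Hardy inequality (used in the norm equivalence, needing $p_i > 1$) and the backward Hardy inequality (needing $p < \infty$) are bounded across the entire admissible range $1 \le q_i \le \infty$, including the degenerate $q_i = \infty$ endpoints where the H\"older step becomes a supremum bound; the strict constraints $1 < p, p_1, p_2 < \infty$ are exactly what guarantees these inequalities hold. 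An alternative route avoids the rearrangement computation entirely by writing $L^{p,q}$ as a real interpolation space between Lebesgue spaces and invoking the bilinear real interpolation theorem together with Young's inequality; the exponent relation $\frac1q = \frac1{q_1}+\frac1{q_2}$ then emerges from the additivity of second indices under bilinear interpolation.
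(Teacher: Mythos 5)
Your argument is correct and is precisely the classical O'Neil proof (rearrangement equivalence of the Lorentz quasinorm via $f^{**}$, the pointwise bound $(f*g)^{**}(t) \le t\,f^{**}(t)g^{**}(t) + \int_t^\infty f^*g^*\,ds$, then Hardy and H\"older on $((0,\infty),\tfrac{dt}{t})$), which is exactly the argument in the references the paper cites for this lemma without proof. The exponent bookkeeping you describe, including the role of $1<p,p_1,p_2<\infty$ in the forward and backward Hardy inequalities, is the standard and correct way to close it.
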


\subsection{Backwards mixed norms}
    Due to the derivative in the nonlinearity of \eqref{gKdV}, a central tool in our analysis will be the use of backwards mixed norms. We recall the necessary theory here.
    
    These spaces allow for the use of the Kato smoothing effect and the maximal estimate, which we recall in the following Lemma:
    \begin{lemma}[Kato smoothing and maximal function estimate, \cite{KPV93}] 
        The Airy evolution satisfies: 
        \begin{align}
            \big\|\partial_x e^{-t\partial^3_x}f \big\|_{L^\infty_xL^2_{t}} & \lesssim \|f\|_{L^{2}_{x}}, \label{prel/kato}\\
            \big\|e^{-t\partial^3_x} f\big\|_{L_{x}^{4} L_{t}^{\infty}} & \lesssim  \|u_0\|_{\dot{H}^{\frac{1}{4}}_x}. \label{prel/maximal}
    	\end{align}
    \end{lemma}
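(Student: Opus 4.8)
The plan is to handle the two estimates separately: \eqref{prel/kato} is essentially an application of Plancherel in the time variable, while \eqref{prel/maximal} is the substantial one and will follow from a $TT^*$ argument driven by a uniform oscillatory-integral kernel bound.

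For the Kato smoothing estimate \eqref{prel/kato}, I would fix $x$ and regard $t\mapsto \partial_x e^{-t\partial_x^3}f(x) = \tfrac{1}{\sqrt{2\pi}}\int i\xi\, e^{ix\xi}e^{it\xi^3}\widehat f(\xi)\,d\xi$ as a function of $t$ alone. The substitution $\eta=\xi^3$, a bijection of $\R$ with $d\eta = 3|\xi|^2\,d\xi$, recasts this as an inverse Fourier transform in $t$ of the multiplier $m(\eta) = \tfrac{i\,\operatorname{sgn}(\xi)}{3|\xi|}e^{ix\xi}\widehat f(\xi)\big|_{\xi=\eta^{1/3}}$, so Plancherel in $t$ gives
\[
\big\|\partial_x e^{-t\partial_x^3}f(x)\big\|_{L_t^2}^2 = \int_\R |m(\eta)|^2\,d\eta = \int_\R \frac{1}{9|\xi|^2}\,|\widehat f(\xi)|^2\cdot 3|\xi|^2\,d\xi = \tfrac13\|f\|_{L^2}^2.
\]
The two powers of $\xi$ supplied by $\partial_x$ cancel exactly against the Jacobian factor $|\xi|^{-2}$, and since $x$ enters only through the unimodular factor $e^{ix\xi}$ the bound is uniform in $x$; taking the supremum in $x$ yields \eqref{prel/kato}.

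For the maximal estimate \eqref{prel/maximal}, write $f=|\partial_x|^{-1/4}\phi$, so that $\|f\|_{\dot H^{1/4}}=\|\phi\|_{L^2}$ and the claim becomes $\|S\phi\|_{L_x^4}\lesssim \|\phi\|_{L^2}$ for the linearized operator $S\phi(x) = e^{-t(x)\partial_x^3}|\partial_x|^{-1/4}\phi(x)$, where $t(x)$ is an arbitrary measurable function realizing the supremum in $t$ up to a constant; this linearization is harmless by monotone convergence. I would then pass to $SS^*$, whose kernel is
\[
K(x,y) = \tfrac{1}{2\pi}\int_\R e^{i\big((x-y)\xi + (t(x)-t(y))\xi^3\big)}\,|\xi|^{-1/2}\,d\xi,
\]
and aim for the pointwise bound $|K(x,y)|\lesssim |x-y|^{-1/2}$, uniform in the time-lag $b:=t(x)-t(y)$. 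Rescaling $\xi\mapsto \xi/|x-y|$ extracts this homogeneity and reduces matters to the uniform boundedness over all $c\in\R$ of the one-parameter family $J(c) = \int_\R e^{i(\eta+c\eta^3)}|\eta|^{-1/2}\,d\eta$. The singularity at $\eta=0$ is integrable and contributes $O(1)$; for $|\eta|$ large the phase is nonstationary when $c\ge 0$, while for $c<0$ the stationary points sit at $|\eta_0|\sim |c|^{-1/2}$, where van der Corput contributes $|\eta_0|^{-1/2}\,|\Phi''(\eta_0)|^{-1/2}\sim |c|^{1/4}\cdot|c|^{-1/4}=O(1)$. The precise cancellation of these two powers of $c$ is exactly what forces the regularity $\tfrac14$: it is the unique exponent for which the amplitude weight and the Hessian balance, and it simultaneously matches the kernel homogeneity $|x-y|^{-1/2}$ to the exponents needed below. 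Granting $\sup_c|J(c)|<\infty$, we have $|K(x,y)|\lesssim |x-y|^{-1/2}$, and since $|x|^{-1/2}\in L^{2,\infty}(\R)$ --- precisely the observation highlighted after \eqref{prel/quasinorm} --- the Young--O'Neil inequality gives $\|SS^*h\|_{L^{4,4/3}}\lesssim \|h\|_{L^{4/3}}$, hence $\|SS^*h\|_{L^4}\lesssim \|h\|_{L^{4/3}}$ by Lorentz nesting. By the identity $\|S\|_{L^2\to L^4}^2 = \|SS^*\|_{L^{4/3}\to L^4}$ this yields \eqref{prel/maximal}.

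I expect the main obstacle to be the uniform-in-$b$ kernel bound, that is, establishing $\sup_c|J(c)|<\infty$. The time-lag $b$ ranges over all of $\R$ with no a priori control, so one cannot simply invoke a fixed-time estimate; the stationary-phase analysis must be carried out carefully enough to exhibit the exact cancellation between the amplitude weight and the Hessian. Everything else --- the Plancherel computation, the $TT^*$ passage, and the Young--O'Neil endgame --- is routine once this bound is in hand.
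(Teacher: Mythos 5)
Your proof is correct and coincides with the standard argument in the cited reference: the paper itself gives no proof of this lemma, quoting it directly from \cite{KPV93}, where \eqref{prel/kato} is exactly the Plancherel-in-$t$ computation you give and \eqref{prel/maximal} is proved by linearizing the supremum, passing to $SS^*$, and invoking the uniform oscillatory-integral bound $\sup_c\big|\int e^{i(\eta+c\eta^3)}|\eta|^{-1/2}\,d\eta\big|<\infty$ together with the weak-type kernel estimate $|K(x,y)|\lesssim|x-y|^{-1/2}$. You have correctly identified the one genuinely nontrivial ingredient (the uniformity in the time-lag of that oscillatory integral, where the $|\eta|^{-1/2}$ weight exactly balances the Hessian at the stationary points), and the rest of your outline is sound.
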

    
    These estimates combine to give a range of Strichartz estimates in the backwards mixed norms $L_x^pL_t^q$:
    \begin{lemma} \label{gKdVL(XT)Norm}
    Let $2 \leq p,q \leq \infty$ satisfy $\frac{4}{p}+\frac{2}{q}=1$. Then
    	\begin{equation}
    		\big\||\partial_x|^{1-\frac{5\theta}{4}}e^{-t\partial^3_x} u_0\big\|_{L_{x}^{p} L_{t}^{q}} \lesssim  \|u_0\|_{L^{2}_x}.
    	\end{equation}
    Moreover, for $2\leq p_i,q_i  \leq \infty$ satisfying $\frac{4}{p_1}+\frac{2}{q_1}=\frac{4}{p_2}+\frac{2}{q_2}=1$, we may estimate
    	\begin{equation}
    		\left\||\partial_x|^{2-\frac{5}{p_1}-\frac{5}{p_2}}\int_{0}^{t} e^{-(t-s)\partial^3_x}g(s) ds\right\|_{L_{x}^{p_1} L_{t}^{q_1}} \lesssim  \|g\|_{L_{x}^{p_2'} L_{t}^{q_2'}}.
    	\end{equation}
    \end{lemma}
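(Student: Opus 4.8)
The plan is to derive both estimates from the two endpoint bounds \eqref{prel/kato} and \eqref{prel/maximal} by interpolation and duality. Throughout, write $U(t) = e^{-t\partial_x^3}$, which is unitary on $L^2_x$ with $U(t)^\ast = U(-t)$, and recall that for each $\beta \in \R$ the imaginary derivative power $|\partial_x|^{i\beta}$ is a Fourier multiplier of modulus one, hence an $L^2_x$ isometry commuting with $U(t)$. I read the undefined exponent as $\theta = \tfrac{4}{p} = 1 - \tfrac{2}{q} \in [0,1]$, so that the admissibility relation $\tfrac4p + \tfrac2q = 1$ holds and $1 - \tfrac{5\theta}{4} = 1 - \tfrac5p$.

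First I would prove the homogeneous estimate. The two endpoints are precisely the given bounds: at $\theta = 0$, i.e. $(p,q) = (\infty,2)$, the claim is Kato smoothing \eqref{prel/kato}, while at $\theta = 1$, i.e. $(p,q) = (4,\infty)$, it is the maximal estimate \eqref{prel/maximal} rewritten as $\||\partial_x|^{-\frac14}U(t)f\|_{L^4_x L^\infty_t} \lesssim \|f\|_{L^2_x}$. To interpolate I would introduce the analytic family $T_z f = |\partial_x|^{1 - \frac{5z}{4}} U(t) f$ on the strip $0 \le \operatorname{Re} z \le 1$. On the line $\operatorname{Re} z = 0$, factoring off unimodular Fourier multipliers (the imaginary power and the conversion of $|\partial_x|$ into $\partial_x$) reduces $T_{i\beta}$ to $\partial_x U(t)$ precomposed with an $L^2_x$ isometry, so $\|T_{i\beta}\|_{L^2_x \to L^\infty_x L^2_t} \lesssim 1$ uniformly in $\beta$ by \eqref{prel/kato}; likewise $\|T_{1+i\beta}\|_{L^2_x \to L^4_x L^\infty_t} \lesssim 1$ from \eqref{prel/maximal}. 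Stein's analytic interpolation theorem, together with the identification $[L^\infty_x L^2_t,\, L^4_x L^\infty_t]_\theta = L^p_x L^q_t$ of the complex interpolation space of mixed-norm Lebesgue spaces, then yields the first estimate.

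For the retarded estimate I would run a $TT^\ast$ argument. Writing $S_i f = |\partial_x|^{1 - \frac{5}{p_i}} U(t) f$, the homogeneous bound gives $\|S_i\|_{L^2_x \to L^{p_i}_x L^{q_i}_t} \lesssim 1$, so by duality $\|S_i^\ast\|_{L^{p_i'}_x L^{q_i'}_t \to L^2_x} \lesssim 1$ with $S_i^\ast G = \int_\R U(-s)\, |\partial_x|^{1 - \frac{5}{p_i}} G(s)\, ds$. Composing yields $S_1 S_2^\ast G = \int_\R |\partial_x|^{2 - \frac{5}{p_1} - \frac{5}{p_2}} U(t-s) G(s)\, ds$, bounded from $L^{p_2'}_x L^{q_2'}_t$ to $L^{p_1}_x L^{q_1}_t$; this is exactly the claimed bound but with the full line in place of the truncated interval $[0,t]$. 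To restore the retarded truncation I would invoke the Christ--Kiselev lemma in its Banach-space-valued form, which upgrades boundedness of the untruncated operator to that of $\int_0^t$ whenever the input time-exponent is strictly smaller than the output time-exponent, i.e. $q_2' < q_1$.

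The main obstacle I anticipate is this strict-inequality requirement. Since $q_1, q_2 \ge 2$ forces $q_2' \le 2 \le q_1$, Christ--Kiselev applies whenever we are away from the double endpoint $q_1 = q_2 = 2$, which by the admissibility relation corresponds to $p_1 = p_2 = \infty$; at that single degenerate configuration the lemma is unavailable and the bound $\||\partial_x|^2 \int_0^t U(t-s)g\,ds\|_{L^\infty_x L^2_t} \lesssim \|g\|_{L^1_x L^2_t}$ would have to be handled directly from \eqref{prel/kato} and its dual via the smoothing structure of the Duhamel term. The remaining points to check are the growth hypotheses in Stein interpolation, which are satisfied here because the off-line factors are genuine isometries rather than merely subexponentially bounded operators, and the interpolation identity for mixed-norm spaces.
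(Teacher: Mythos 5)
The paper does not actually prove this lemma --- it is imported from \cite{KPV93}, where the homogeneous bound is obtained exactly as you propose (Stein interpolation of the analytic family $|\partial_x|^{1-\frac{5z}{4}}e^{-t\partial_x^3}$ between Kato smoothing and the maximal estimate, with your reading $\theta=\tfrac4p$ matching the paper's usage in Proposition \ref{spacetime/theorem}), but the retarded bound is proved there by \emph{direct} oscillatory-integral estimates on the truncated kernel at the endpoint configurations, followed by interpolation --- not by $TT^*$ plus Christ--Kiselev. Your homogeneous argument is correct, including the uniform control of the boundary lines via the unimodular imaginary powers.

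The gap is in the retarded estimate. The Christ--Kiselev lemma in its standard Banach-space-valued form upgrades truncation in the \emph{outer} Lebesgue variable: it applies to operators $L^{a}_s(B_1)\to L^{b}_t(B_2)$ with $a<b$. Here the truncation variable $t$ sits in the \emph{inner} slot of the reversed norms $L^{p_2'}_xL^{q_2'}_t\to L^{p_1}_xL^{q_1}_t$ on both sides, so the lemma as you cite it does not apply, and the condition is not simply $q_2'<q_1$. A reversed-norm variant does exist, but its hypotheses require the target norm to be $\ell^r$-subadditive over time-disjoint pieces for some $r$ exceeding the source time exponent; for $L^{p_1}_xL^{q_1}_t$ one only gets $r=\min(p_1,q_1)$ (by the triangle inequality in $L^{p_1/q_1}_x$ when $p_1\geq q_1$, and by $\ell^{p_1}\hookrightarrow\ell^{q_1}$ otherwise), together with a matching superadditivity on the source side --- none of which you verify. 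Relatedly, your fallback at the degenerate configuration understates the difficulty: the retarded double-smoothing bound $\big\||\partial_x|^{2}\int_0^te^{-(t-s)\partial_x^3}g\,ds\big\|_{L^\infty_xL^2_t}\lesssim\|g\|_{L^1_xL^2_t}$ does not follow from \eqref{prel/kato} and its dual (that composition only controls the untruncated integral $\int_{\R}$); it is a separate kernel computation in \cite{KPV93}. The cleanest repair is to follow that reference: prove the truncated estimates directly at the endpoints and interpolate, rather than routing through Christ--Kiselev.
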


    To aid in the proof of Proposition \ref{spacetime/theorem},  we also recall the fractional Leibniz rule in backward mixed norms; see \cite[Theorem A.8]{KPV93}. For our purposes, the range of exponents needed can also be obtained by adapting the standard Littlewood--Paley proof to Banach space valued functions; we direct the interested reader to \cite{operator-valued} and ~\cite[$\S$1.6.4]{Stein93} for the necessary theory.
    \begin{lemma}[Fractional Leibniz rule]\label{proof/leibniz}
        Let $s \geq 0$ and $1 < p,p_i,q_i < \infty$ for $i \in \{1,2,3,4\}$ such that $\frac{1}{p} = \frac{1}{p_1} + \frac{1}{p_2} = \frac{1}{p_3} + \frac{1}{p_4}$ and $\frac{1}{q} = \frac{1}{q_1} + \frac{1}{q_2} = \frac{1}{q_3} + \frac{1}{q_4}$. Then
        \begin{equation*}
            \big\| |\partial_x|^s fg \big\|_{L_x^{q} L_t^{p}} \lesssim \big\| |\partial_x|^sf\big\|_{L_x^{q_1} L_t^{p_1}}\big\| g \big\|_{L_x^{q_2} L_t^{p_2}} + \big\| f \big\|_{L_x^{q_3} L_t^{p_3}}\big\| |\partial_x|^sg \big\|_{L_x^{q_4} L_t^{p_4}}.
        \end{equation*}
    \end{lemma}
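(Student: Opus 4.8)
The plan is to run the classical Littlewood--Paley (paraproduct) proof of the Kato--Ponce inequality, but with values in the Banach space $B = L_t^p$. The crucial structural observation is that $|\partial_x|^s$ differentiates only in $x$: viewing $f$ and $g$ as $B$-valued functions of $x \in \R$ (so that $fg$ is the pointwise-in-$t$ product), the quantity to be estimated is $\||\partial_x|^s(fg)\|_{L_x^q(B)}$, a genuinely one-dimensional vector-valued fractional Leibniz rule. Because $1 < p < \infty$, the space $B = L_t^p$ is a UMD Banach lattice; this is precisely the hypothesis under which the two harmonic-analytic inputs of the scalar proof remain valid in the vector-valued setting, namely the Littlewood--Paley square-function equivalence $\|h\|_{L_x^q(B)} \sim \big\|(\sum_j |P_j h|^2)^{1/2}\big\|_{L_x^q(B)}$ and the Fefferman--Stein vector-valued maximal inequality for the one-dimensional maximal operator $M_x$; see \cite{operator-valued} and \cite[\S1.6.4]{Stein93}.

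First I would fix a Littlewood--Paley decomposition $\{P_j\}$ in the $x$-frequency and split the product into paraproducts,
\begin{equation*}
    fg = \sum_{j} P_j f\, P_{\leq j-2} g + \sum_{j} P_{\leq j-2} f\, P_j g + \sum_{|j-k|\leq 1} P_j f\, P_k g =: \Pi_{\mathrm{hl}} + \Pi_{\mathrm{lh}} + \Pi_{\mathrm{hh}}.
\end{equation*}
In the high--low term each summand is frequency-localized to $|\xi|\sim 2^j$, so $|\partial_x|^s$ restricted to this shell equals $2^{js}$ times a Fourier multiplier lying, uniformly in $j$, in the Mikhlin class; this multiplier is bounded on $L_x^q(B)$ by the UMD operator-valued multiplier theorem, which lets us move the derivative onto the high factor. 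The square-function equivalence, the pointwise bound $|P_{\leq j-2} g| \lesssim M_x g$, and the vector-valued maximal inequality, followed by H\"older's inequality separately in $t$ (exponents $p_1,p_2$) and in $x$ (exponents $q_1,q_2$), then give
\begin{equation*}
    \big\||\partial_x|^s \Pi_{\mathrm{hl}}\big\|_{L_x^q L_t^p} \lesssim \big\||\partial_x|^s f\big\|_{L_x^{q_1} L_t^{p_1}}\,\|g\|_{L_x^{q_2} L_t^{p_2}}.
\end{equation*}
The low--high term $\Pi_{\mathrm{lh}}$ is identical after exchanging $f$ and $g$ and using the exponents $(p_3,q_3,p_4,q_4)$, producing the second summand on the right-hand side.

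The high--high term $\Pi_{\mathrm{hh}}$ requires slightly more care, since the diagonal product $P_j f\, P_k g$ with $|j-k|\leq 1$ is supported in frequencies $|\xi|\lesssim 2^j$ and so its output can be much coarser than the inputs. Decomposing $|\partial_x|^s \Pi_{\mathrm{hh}}$ by output frequency $2^l$, only the scales $l \lesssim j$ contribute, and there $|\partial_x|^s$ supplies a factor $2^{ls}\lesssim 2^{js}$; since $s\geq 0$ this lets us transfer the full derivative onto the high factor $P_j f$ and sum the geometric series in $l$. Cauchy--Schwarz in the diagonal index, the square-function characterization applied to $\{|\partial_x|^s P_j f\}_j$, and the maximal control of $\{P_k g\}_k$ then bound this contribution by the first summand on the right-hand side as well. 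Adding the three estimates yields Lemma \ref{proof/leibniz}.

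The main obstacle is entirely harmonic-analytic rather than equation-specific: one must verify that the scalar Littlewood--Paley and Fefferman--Stein machinery survives with target $L_t^p$. This is exactly the UMD theory for the lattice $L_t^p$, and it is also the source of the hypothesis that every exponent lie in the open interval $(1,\infty)$ --- outside this range $L_t^p$ ceases to be UMD, the randomized square-function equivalence fails, and the argument breaks down.
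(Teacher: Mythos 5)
Your proposal is correct and follows essentially the route the paper itself indicates: the paper does not prove this lemma but cites \cite[Theorem A.8]{KPV93} and remarks that the stated range of exponents follows by adapting the standard Littlewood--Paley paraproduct proof to Banach-space-valued ($B=L_t^p$, UMD) functions, which is precisely what you carry out. The only point worth noting is that your geometric summation in the output frequency for the high--high term needs $s>0$; for $s=0$ the estimate is just H\"older's inequality, so nothing is lost.
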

    
    We also recall a Sobolev embedding in backward mixed spaces. This can be shown by adapting the usual arguments to Banach space valued functions.
    \begin{lemma}[Sobolev embedding]\label{proof/sobolev}
        Fix $1 < p,q,r < \infty$ and $s \geq 0$ such that $\frac{1}{q} + \frac{s}{d} = \frac{1}{r}$. Then
        \begin{equation*}
            \|f\|_{L_x^q L_t^p} \lesssim \big\||\partial_x|^s f\big\|_{L_x^r L_t^p}.
        \end{equation*}
    \end{lemma}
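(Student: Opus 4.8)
The plan is to realize the inverse fractional derivative $|\partial_x|^{-s}$ as a spatial Riesz potential and then reduce the vector-valued estimate to a scalar Hardy--Littlewood--Sobolev inequality via Minkowski's integral inequality. The case $s = 0$ is immediate, so I assume $0 < s < d$; the relation $\frac{1}{q} + \frac{s}{d} = \frac{1}{r}$ with $1 < q,r < \infty$ guarantees this range. Setting $g := |\partial_x|^s f$, it suffices to prove $\big\| |\partial_x|^{-s} g\big\|_{L_x^q L_t^p} \lesssim \|g\|_{L_x^r L_t^p}$. On the dense class of Schwartz functions (and here $s \geq 0$, so the $\dot H^s$ completion causes no trouble), $|\partial_x|^{-s}$ acts in the spatial variable alone as convolution with the Riesz kernel $I_s(x) = c_{d,s}\,|x|^{-(d-s)}$, so that
\[
    \big(|\partial_x|^{-s} g\big)(t,x) = c_{d,s}\int_{\R^d} |x-y|^{-(d-s)}\, g(t,y)\, dy.
\]

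The key step is to commute the temporal norm past this spatial convolution. Since the kernel is nonnegative, the generalized Minkowski (integral triangle) inequality gives, pointwise in $x$,
\[
    \big\|\big(|\partial_x|^{-s} g\big)(\cdot,x)\big\|_{L_t^p} \leq c_{d,s}\int_{\R^d} |x-y|^{-(d-s)}\,\big\|g(\cdot,y)\big\|_{L_t^p}\, dy = c_{d,s}\,\big(|\cdot|^{-(d-s)} * G\big)(x),
\]
where $G(x) := \|g(\cdot,x)\|_{L_t^p}$ is a scalar function of $x$ alone. This reduces the problem to a purely spatial convolution estimate for $G$.

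Taking the $L_x^q$ norm of both sides and using $\|f\|_{L_x^q L_t^p} = \big\|\,\|f(\cdot,x)\|_{L_t^p}\,\big\|_{L_x^q}$, I would then invoke the Young--O'Neil inequality: the kernel $|x|^{-(d-s)}$ lies in $L^{\frac{d}{d-s},\infty}(\R^d)$ and $G \in L^r = L^{r,r}$, so with $\frac{1}{q} + 1 = \frac{d-s}{d} + \frac{1}{r}$---which is exactly the hypothesis $\frac{1}{q} + \frac{s}{d} = \frac{1}{r}$---the convolution lies in $L^{q,r}$. Since $s > 0$ forces $r \leq q$, Lorentz nesting embeds $L^{q,r} \hookrightarrow L^{q,q} = L^q$, yielding
\[
    \|f\|_{L_x^q L_t^p} \lesssim \|G\|_{L_x^r} = \|g\|_{L_x^r L_t^p} = \big\| |\partial_x|^s f\big\|_{L_x^r L_t^p},
\]
as claimed.

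This is the standard Sobolev-by-Riesz-potentials argument carried out with $L_t^p$ as the target Banach space, so I do not expect a genuine obstacle. The only points demanding care are the justification of the Minkowski step---which hinges solely on positivity of the Riesz kernel and the scalar triangle inequality applied fiberwise in $y$---and the bookkeeping of the Lorentz indices in Young--O'Neil, where one must observe that the naturally produced space $L^{q,r}$ improves to $L^q$ by nesting rather than requiring the sharp scalar HLS theorem directly.
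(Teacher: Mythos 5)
Your proof is correct, and it is precisely the kind of argument the paper has in mind: the paper gives no proof of this lemma, remarking only that it follows by ``adapting the usual arguments to Banach space valued functions,'' and your realization of $|\partial_x|^{-s}$ as a spatial Riesz potential, followed by Minkowski's integral inequality to reduce to a scalar convolution bound for $G(x)=\|g(\cdot,x)\|_{L_t^p}$ and then Young--O'Neil with the weak-$L^{d/(d-s)}$ kernel, is exactly such an adaptation. The index bookkeeping checks out ($0<s<d$ is forced by the hypotheses, the output space $L^{q,r}$ nests into $L^q$ since $r<q$), so there is nothing to correct.
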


\subsection{Endpoint Leibniz rule}
    Finally, it will be necessary to employ endpoint fractional Leibniz rules. For our purposes, the work of Dong Li \cite{endpoint-Leibniz} is sufficient and the necessary results are stated in the following proposition. For further endpoint results, we direct the interested reader to \cite{endpoint-Leibniz-4,endpoint-Leibniz-3, endpoint-Leibniz-2} and references therein.
    \begin{proposition}[Endpoint Leibniz rule]\label{prel/leibniz}
        Let $s > 0$ and $1 < p,q < \infty$. Then
        \begin{equation*}
            \||\partial_x|^s fg\|_{L^1} \lesssim_{p,s} \big\||\partial_x|^sf\big\|_{L^p}\|g\|_{L^{p'}} + \|f\|_{L^q}\big\||\partial_x|^s g\big\|_{L^{q'}}.
        \end{equation*}
        Additionally, for $1 < p < \infty$,
        \begin{equation*}
            \||\partial_x|^s fg\|_{L^p} \lesssim_{p,s} \big\||\partial_x|^sf\big\|_{L^{q_1}}\|g\|_{L^{r_1}} + \|f\|_{L^{q_2}}\big\||\partial_x|^s g\big\|_{L^{r_2}},
        \end{equation*}
        where $1 < q_i, r_i \leq \infty$ and $\frac{1}{q_i} + \frac{1}{r_i} = \frac{1}{p}$. Notably, this includes $q_1 = \infty$ or $r_2 = \infty$.
    \end{proposition}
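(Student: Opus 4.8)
The plan is to realize both inequalities as special cases of the endpoint fractional Leibniz estimates of Dong Li~\cite{endpoint-Leibniz}, so the main task is to check that the exponent configurations appearing here are admissible in his theorems. The first estimate is the $L^1$ endpoint of the Kato--Ponce inequality: since $\tfrac1p + \tfrac1{p'} = 1$ and $\tfrac1q + \tfrac1{q'} = 1$, both product terms land in $L^1$ by H\"older, and Li's framework permits the target exponent $1$ together with the balancing $1 < p, q < \infty$. The second estimate is the standard $L^p$ fractional Leibniz rule, but with the feature that one of the H\"older exponents is allowed to be $\infty$ (either $q_1 = \infty$, forcing $r_1 = p$, or $r_2 = \infty$, forcing $q_2 = p$); these $L^\infty$ endpoints are precisely the cases isolated in~\cite{endpoint-Leibniz}.

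For a self-contained argument, I would run the usual paraproduct decomposition. Fixing a Littlewood--Paley partition, I write $fg = \Pi_{\mathrm{hl}} + \Pi_{\mathrm{lh}} + \Pi_{\mathrm{hh}}$, separating the frequency interactions into high--low, low--high, and high--high. In the high--low term the output is frequency-localized at the scale of $f$, so $|\partial_x|^s$ effectively acts on $f$; up to harmless commutator errors this piece is bounded by $\big\||\partial_x|^s f\big\| \, \|g\|$ in the appropriate norms via a vector-valued maximal inequality. The low--high term is symmetric and contributes $\|f\|\,\big\||\partial_x|^s g\big\|$.

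The high--high interaction $\Pi_{\mathrm{hh}} = \sum_{j \sim k}(P_j f)(P_k g)$ is the delicate term: its output is spread across all frequencies $\lesssim 2^j$, so $|\partial_x|^s$ does not localize, and convergence of the resulting sum relies on the gain $s > 0$ when summing $\sum_{\ell \le j} 2^{s\ell} \lesssim 2^{sj}$. This furnishes the absolute summability needed to reassemble the pieces.

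The main obstacle is handling the $L^1$ target and the $L^\infty$ inputs, where the Hardy--Littlewood maximal function and the Fefferman--Stein vector-valued maximal inequality both fail, so the Coifman--Meyer multiplier machinery does not apply directly. At these endpoints I would replace the maximal-function arguments by direct kernel estimates, exploiting the rapid decay of the Littlewood--Paley kernels and summing the pieces by the triangle inequality; at the $L^\infty$ endpoint one simply places the low-frequency factor in $L^\infty$ without invoking a maximal function at all. This is exactly the technical content supplied by~\cite{endpoint-Leibniz}, which is why invoking that reference is the efficient route and why I would present the proof as a verification of its hypotheses rather than reproving it.
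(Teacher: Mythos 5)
Your proposal matches the paper's treatment: the paper gives no proof of this proposition and simply imports it from Dong Li's endpoint Kato--Ponce results, exactly as you do, and your verification that the $L^1$ target (with $s>0$ in dimension one) and the $L^\infty$-input cases fall within the scope of that reference is correct. Your supplementary paraproduct sketch is a reasonable outline of the underlying argument, but it is not needed here and the genuinely hard endpoint work is, as you note, deferred to the cited reference.
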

    For instance, in the analysis of the mass-critical \eqref{gKdV}, we will apply the preceding estimates twice to find
    \begin{equation}\label{prel/endpoint product}
        \big\| |\partial_x|^\frac{1}{2} u^5 \big\|_{L^1} \lesssim \|u\|_{L^\infty} \|u\|_{L^3}^3 \big\||\partial_x|^\frac{1}{2} u\big\|_{L^\infty};
    \end{equation}
    see the progression from \eqref{proof/failure 0} to \eqref{proof/failure 1}.

\section{Lorentz--Strichartz estimates}
    In this section, we prove Lorentz--Strichartz estimates for solutions to \eqref{gKdV}. First observed in \cite{lorentz-strichartz}, Fan, Killip, Vi\c{s}an, and Zhao later used these estimates to control the time decay in the study of dispersive estimates \cite{FKVZ24}. Such estimates were refined in \cite{Kowal24} to include Lorentz exponents below $1$ and Lorentz spatial norms.

In this section, we extend these Lorentz--Strichartz estimates further, allowing the left- and right-hand spacetime norms to differ. This is achieved only for suitable spacetime norms, dictated by the interpolation argument used. At the moment, we know of no extensions to the range of admissible spaces on the left-hand side, though we expect such estimates to hold in general.

Fundamentally, Strichartz estimates are an extension of the linear dispersive estimates to the nonlinear \eqref{gKdV}. We then begin by recalling the standard dispersive estimates for the Airy evolution $e^{-t\partial^3_x}$ \cite{KPV89}:
\begin{lemma}[Dispersive estimates]
For $\theta \in [0,1]$, $r = \frac{2}{1-\theta}$ and $0\leq \alpha \leq \frac{1}{2}$,
    \begin{align}
        \left\||\partial_x|^{\theta \alpha}e^{-t\partial^3_x}u_0\right\|_{L^r_x} \lesssim  |t|^{-\frac{\alpha+1}{3}\theta } \left\|u_0\right\|_{L^{r'}_x}, \label{prel/dispersive}
    \end{align}
uniformly for $t \neq 0$. Moreover,
\begin{align}
        \left\|\partial_xe^{-t\partial^3_x}f\right\|_{L^{\infty}_x} \lesssim  |t|^{-\frac{1}{2}} \left\||\partial_x|^{\frac{1}{2}}f\right\|_{L^{1}_x}. \label{prel/dispersive-adapted}
    \end{align}
\end{lemma}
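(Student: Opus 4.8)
The plan is to derive both displays from a single family of kernel bounds for the Airy propagator together with interpolation. Writing $e^{-t\partial_x^3}$ as the Fourier multiplier with symbol $e^{it\xi^3}$, the operator $|\partial_x|^{\theta\alpha}e^{-t\partial_x^3}$ is convolution against the kernel $K_{t,\theta\alpha}(x) = \tfrac{1}{2\pi}\int_\R e^{ix\xi}|\xi|^{\theta\alpha}e^{it\xi^3}\,d\xi$. I would first record the two endpoints in $\theta$. At $\theta = 0$ we have $r = 2$ and no derivative, and the estimate is simply the unitarity of $e^{-t\partial_x^3}$ on $L^2_x$, which follows from Plancherel since $|e^{it\xi^3}| \equiv 1$. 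At $\theta = 1$ we have $r = \infty$, $r' = 1$, and the claim is the weighted dispersive bound $\||\partial_x|^\alpha e^{-t\partial_x^3}u_0\|_{L^\infty_x}\lesssim |t|^{-\frac{1+\alpha}{3}}\|u_0\|_{L^1_x}$, which by Young's inequality reduces to the kernel bound $\|K_{t,\alpha}\|_{L^\infty_x}\lesssim |t|^{-\frac{1+\alpha}{3}}$.

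To prove the kernel bound I would rescale $\xi = |t|^{-1/3}\eta$, which extracts exactly the prefactor $|t|^{-\frac{1+\alpha}{3}}$ and reduces matters to the unit-time oscillatory integral $I(y) = \int_\R e^{i(y\eta\, \pm\, \eta^3)}|\eta|^\alpha\,d\eta$, where $y = x|t|^{-1/3}$ and the sign is that of $t$; the goal is $\sup_y |I(y)| < \infty$. I would split $I(y)$ into a region near $\eta = 0$, where $|\eta|^\alpha$ is integrable (as $\alpha > -1$) and the contribution is $O(1)$, and the remaining regions, which I would handle by stationary phase. The phase $\phi(\eta) = y\eta\pm\eta^3$ satisfies $\phi'''\equiv\pm 6$, so van der Corput controls the non-stationary pieces after an integration by parts against the cubic oscillation; the delicate contribution comes from the stationary points $\eta_* = \pm(|y|/3)^{1/2}$ (present for one sign of $y$), where $|\phi''(\eta_*)| = 6|\eta_*|\sim |y|^{1/2}$. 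The stationary-phase contribution is of size $|\eta_*|^\alpha|\phi''(\eta_*)|^{-1/2}\sim |y|^{\frac{2\alpha-1}{4}}$, which stays bounded as $|y|\to\infty$ \emph{exactly} when $\alpha \le \tfrac12$. This balancing of the growing weight against the cubic oscillation at the stationary frequency is the main obstacle, and it is precisely what fixes the admissible range $0\le\alpha\le\tfrac12$.

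With the two endpoints in hand, I would fill in $\theta\in(0,1)$ by Stein analytic interpolation applied to the family $T_z = |\partial_x|^{z\alpha}e^{-t\partial_x^3}$ on the strip $0\le\Re z\le 1$, so that the order of the derivative interpolates along with the Lebesgue exponents. On $\Re z = 0$ the symbol $|\xi|^{iy\alpha}$ has modulus one, so $T_{iy}$ is an $L^2_x$ isometry with norm $1$; on $\Re z = 1$ the kernel analysis above persists with an admissible (at most polynomial in $y$) constant, giving the $L^1_x\to L^\infty_x$ bound with constant $\lesssim|t|^{-\frac{1+\alpha}{3}}$. Interpolating at $z = \theta$ produces the output exponent $\tfrac1r = \tfrac{1-\theta}{2}$, the matching input exponent $r'$, the derivative order $\theta\alpha$, and the decay rate $|t|^{-\frac{1+\alpha}{3}\theta}$, which is exactly \eqref{prel/dispersive}.

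Finally, I would deduce \eqref{prel/dispersive-adapted} from the $\theta = 1$, $\alpha = \tfrac12$ endpoint by factoring the symbol as $i\xi\, e^{it\xi^3} = \big(i\,\mathrm{sgn}(\xi)\,|\xi|^{1/2}\big)\big(|\xi|^{1/2}e^{it\xi^3}\big)$, i.e.\ writing $\partial_x e^{-t\partial_x^3}f = \big(i\,\mathrm{sgn}(\partial_x)|\partial_x|^{1/2}e^{-t\partial_x^3}\big)\big(|\partial_x|^{1/2}f\big)$. The associated kernel differs from $K_{t,1/2}$ only by the bounded factor $\mathrm{sgn}(\xi)$, which introduces a harmless sign jump at $\eta = 0$ but affects neither the near-origin integrability nor the stationary-phase estimate, so the same argument yields $\|\partial_x e^{-t\partial_x^3}f\|_{L^\infty_x}\lesssim |t|^{-1/2}\||\partial_x|^{1/2}f\|_{L^1_x}$.
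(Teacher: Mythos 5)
Your argument is correct and is essentially the standard Kenig--Ponce--Vega proof that the paper cites for \eqref{prel/dispersive} (rescaled oscillatory-integral kernel bound with the stationary-point balance $|y|^{(2\alpha-1)/4}$ forcing $\alpha\le\tfrac12$, plus Stein interpolation against the $L^2$ endpoint), and your treatment of \eqref{prel/dispersive-adapted} --- absorbing the unbounded factor $\mathrm{sgn}(\xi)$ into the kernel rather than trying to pull out a Hilbert transform --- is exactly the ``straightforward adaptation'' the paper alludes to. The only cosmetic issue is the reuse of the letter $y$ for both the rescaled spatial variable and the imaginary part of the interpolation parameter.
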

We note that the estimate \eqref{prel/dispersive-adapted} is nonstandard, but follows from a straightforward adaptation of the proof of \eqref{prel/dispersive}; see \cite{erdogan-tzirakis} for a textbook treatment. 
From the dispersive estimates, along with the Young--O'Neil convolutional inequality, we find the following standard Lorentz--Strichartz estimates:
\begin{lemma}\label{prel/strichartz-basic}
    Let $\theta \in [0,1)$, $\alpha \in [0,\frac{1}{2}]$, $(q,p) = (\frac{6}{\theta(\alpha + 1)},\frac{2}{1-\theta})$ and $\frac{1}{p} + \frac{1}{p'} = \frac{1}{q} + \frac{1}{q} = 1$. Then for any spacetime slab $J \times \R$ and $t_0 \in J$,
    \begin{equation*}
        \bigg\|\int_{t_0}^t |\partial_x|^\frac{\theta\alpha}{2} e^{-(t-s)\partial_x^3} g(s,\cdot) ds\bigg\|_{L_t^{q,2}L_x^{p}(J)} \lesssim \big\||\partial_x|^{-\frac{\theta\alpha}{2}}g\big\|_{L_t^{q',2}L_x^{p'}(J)}.
    \end{equation*}
\end{lemma}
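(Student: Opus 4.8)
The plan is to reduce the estimate to the pointwise dispersive bound \eqref{prel/dispersive} combined with the Young--O'Neil convolutional inequality, exploiting the positivity of the resulting temporal kernel to avoid any oscillation (Christ--Kiselev) argument. First I would absorb the negative derivative on the right-hand side by setting $h = |\partial_x|^{-\frac{\theta\alpha}{2}} g$, so that the operator becomes $\int_{t_0}^t |\partial_x|^{\theta\alpha} e^{-(t-s)\partial_x^3} h(s)\, ds$ and the claimed bound reads
\begin{equation*}
    \bigg\| \int_{t_0}^t |\partial_x|^{\theta\alpha} e^{-(t-s)\partial_x^3} h(s)\, ds \bigg\|_{L_t^{q,2}L_x^p(J)} \lesssim \|h\|_{L_t^{q',2}L_x^{p'}(J)}.
\end{equation*}

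The key numerical observation is that $r = p = \frac{2}{1-\theta}$, so the dispersive estimate \eqref{prel/dispersive} applies with exactly this value of $r$ and, for each fixed pair $(t,s)$, controls the spatial norm of the integrand by $\big\||\partial_x|^{\theta\alpha} e^{-(t-s)\partial_x^3} h(s)\big\|_{L_x^p} \lesssim |t-s|^{-\frac{(\alpha+1)\theta}{3}}\|h(s)\|_{L_x^{p'}}$. Since $\frac{1}{q} = \frac{(\alpha+1)\theta}{6}$, the temporal decay rate is precisely $\frac{(\alpha+1)\theta}{3} = \frac{2}{q}$. Extending $h$ by zero outside $J$ and applying the triangle inequality in $L_x^p$, the retarded integral is pointwise dominated in $t$ by the full positive convolution,
\begin{equation*}
    \bigg\| \int_{t_0}^t |\partial_x|^{\theta\alpha} e^{-(t-s)\partial_x^3} h(s)\, ds \bigg\|_{L_x^p} \lesssim \int_\R |t-s|^{-\frac{2}{q}}\, \|h(s)\|_{L_x^{p'}}\, ds = \big(|\cdot|^{-\frac{2}{q}} * \phi\big)(t), \qquad \phi(s) := \|h(s)\|_{L_x^{p'}}.
\end{equation*}
Here it is essential that the integrand is nonnegative, so that replacing the interval between $t_0$ and $t$ by all of $\R$ only increases the right-hand side; this is exactly what removes the need for the Christ--Kiselev lemma in the non-diagonal case.

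It then remains to estimate the convolution in $L_t^{q,2}$. By monotonicity of the Lorentz quasinorm under pointwise domination of nonnegative functions, together with the Young--O'Neil convolutional inequality, I obtain $\big\||\cdot|^{-\frac{2}{q}} * \phi\big\|_{L_t^{q,2}} \lesssim \big\||t|^{-\frac{2}{q}}\big\|_{L_t^{q/2,\infty}} \|\phi\|_{L_t^{q',2}}$. The Lebesgue exponents balance since $\frac{1}{q} + 1 = \frac{2}{q} + \frac{1}{q'}$, and the Lorentz exponents since $\frac{1}{2} = \frac{1}{\infty} + \frac{1}{2}$; for $\theta \in (0,1)$ all of $q, q', q/2$ lie in $(1,\infty)$, as the Young--O'Neil hypotheses require. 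Invoking the central observation $\big\||t|^{-1/a}\big\|_{L_t^{a,\infty}} \sim 1$ with $a = q/2$ gives $\big\||t|^{-\frac{2}{q}}\big\|_{L_t^{q/2,\infty}} \sim 1$, while $\|\phi\|_{L_t^{q',2}} = \|h\|_{L_t^{q',2}L_x^{p'}} = \big\||\partial_x|^{-\frac{\theta\alpha}{2}} g\big\|_{L_t^{q',2}L_x^{p'}}$, which is the desired right-hand side; the degenerate endpoint $\theta = 0$ (where $q = \infty$, $p = 2$) reduces to the energy estimate and may be treated directly. There is no serious obstacle beyond bookkeeping: the main points to verify are the numerology linking the spatial exponent $p$, the order of smoothing $\theta\alpha$, and the temporal decay $\frac{2}{q}$ so that the kernel lands exactly in $L_t^{q/2,\infty}$, and the positivity argument that licenses dominating the truncated operator by the untruncated convolution.
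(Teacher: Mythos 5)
Your proposal is correct and is exactly the argument the paper intends: the lemma is stated as following "from the dispersive estimates, along with the Young--O'Neil convolutional inequality," and your write-up simply fills in the details (pointwise dispersive bound with decay $|t-s|^{-2/q}$, positivity to dominate the retarded integral by the full convolution, and Young--O'Neil with the kernel in $L_t^{q/2,\infty}$), with the numerology checking out since $\tfrac{\theta(\alpha+1)}{3}=\tfrac{2}{q}$ and $q>4$ keeps $q/2,q'\in(1,\infty)$ for $\theta\in(0,1)$. Your separate treatment of the degenerate case $\theta=0$ is also appropriate.
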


In the works \cite{Kowal24, FKVZ24}, these estimates were sufficient to control all needed spacetime norms. However, due to the derivative in the nonlinearity, for \eqref{gKdV} it is important to choose symmetric spacetime norms --- of the form $L^p_{t,x}$ --- to convert from forwards to backwards mixed spaces; see \eqref{proof/8}.
\begin{proposition}[Lorentz--Strichartz estimates]\label{prel/strichartz}
    Let $\theta_1,\theta_2 \in (0,1)$, $\alpha \in [0,\frac{1}{2}]$, $(q_i,p_i) = (\frac{6}{\theta_i(\alpha + 1)},\frac{2}{1-\theta_i})$ for $i \in \{1,2\}$, and $\frac{1}{p_2} + \frac{1}{p_2'} = \frac{1}{q_2} + \frac{1}{q_2'} = 1$.

    Suppose that $q_1 \leq q_2$ and $q_1 < \infty$. Then for any spacetime slab $J \times \R$ and $t_0 \in J$,
    \begin{equation*}
        \bigg\|\int_{t_0}^t |\partial_x|^\frac{\theta_1\alpha}{2} e^{-(t-s)\partial_x^3} g(s,\cdot) ds\bigg\|_{L_t^{q_1,2}L_x^{p_1}(J)} \lesssim \big\||\partial_x|^{-\frac{\theta_2\alpha}{2}}g\big\|_{L_t^{q_2'}L_x^{p_2'}(J)}.
    \end{equation*}

    Instead, suppose that $q_2 \leq q_1$. Then for any spacetime slab $J \times \R$ and $t_0 \in J$,
    \begin{equation*}
        \bigg\|\int_{t_0}^t |\partial_x|^\frac{\theta_1\alpha}{2} e^{-(t-s)\partial_x^3} g(s,\cdot) ds\bigg\|_{L_t^{q_1,\frac{2q_1}{q_2}}L_x^{p_1}(J)} \lesssim \big\||\partial_x|^{-\frac{\theta_2\alpha}{2}}g\big\|_{L_t^{q_2',2}L_x^{p_2'}(J)}.
    \end{equation*}
\end{proposition}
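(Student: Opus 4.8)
The plan is to prove the estimate by factoring the inhomogeneous Airy propagator through the scaling-critical space $L^2_x$; this factorization is exactly what lets the two sides of the inequality live at different admissible levels. Setting $F=|\partial_x|^{-\theta_2\alpha/2}g$, so that the right-hand side reads $\|F\|_{L_t^{q_2'}L_x^{p_2'}(J)}$ (resp. $\|F\|_{L_t^{q_2',2}L_x^{p_2'}(J)}$), the operator under study is
\[
  F \longmapsto \int_{t_0}^t |\partial_x|^{\frac{\theta_1\alpha}{2}} e^{-(t-s)\partial_x^3} |\partial_x|^{\frac{\theta_2\alpha}{2}} F(s)\,ds .
\]
I would first bound the \emph{non-retarded} operator, in which $\int_{t_0}^t$ is replaced by $\int_J$. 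Since $e^{-(t-s)\partial_x^3}=e^{-t\partial_x^3}e^{s\partial_x^3}$ and $|\partial_x|$ commutes with the evolution, this full operator factors as $F\mapsto|\partial_x|^{\theta_1\alpha/2}e^{-t\partial_x^3}\Phi$ with $\Phi:=\int_J |\partial_x|^{\theta_2\alpha/2} e^{s\partial_x^3} F(s)\,ds$, i.e. as the forward propagator at level $\theta_1$ composed with the adjoint propagator at level $\theta_2$, joined at the derivative-free space $L^2_x$.

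The two ingredients are the homogeneous Lorentz--Strichartz estimate at level $\theta_1$ and the dual estimate at level $\theta_2$,
\[
  \bigl\||\partial_x|^{\theta_1\alpha/2} e^{-t\partial_x^3}\phi\bigr\|_{L_t^{q_1,2}L_x^{p_1}(J)} \lesssim \|\phi\|_{L^2_x}, \qquad \|\Phi\|_{L^2_x} \lesssim \|F\|_{L_t^{q_2',2}L_x^{p_2'}(J)} ,
\]
both of which follow from the dispersive bound \eqref{prel/dispersive} and the Young--O'Neil inequality by the same $TT^*$ argument that yields Lemma \ref{prel/strichartz-basic}, now for the full operator; crucially the spatial exponents are diagonal ($L_x^{p_i'}\leftrightarrow L_x^{p_i}$) at \emph{each} level, so there is no obstruction. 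Composing them bounds the full operator from $L_t^{q_2',2}L_x^{p_2'}(J)$ into $L_t^{q_1,2}L_x^{p_1}(J)$. I would stress that a direct dispersive-plus-Young argument cannot be run in one step: the pointwise-in-time bound it would require is the off-diagonal estimate $|\partial_x|^{(\theta_1+\theta_2)\alpha/2}e^{-\tau\partial_x^3}\colon L_x^{p_2'}\to L_x^{p_1}$ with decay $|\tau|^{-(1/q_1+1/q_2)}$, and the Airy kernel fails to lie in the weak Lebesgue space demanded by Young's inequality for this exponent pair unless $\theta_1=\theta_2$. It is the time integration, routed through the $L^2$-factorization, that manufactures the off-diagonal spatial behaviour.

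It then remains to reinstate the retarded cutoff $s<t$, and I expect this to be the main obstacle. Since $\theta_i\in(0,1)$ and $\alpha\in[0,\tfrac12]$ force $\theta_i(\alpha+1)<\tfrac32$, we have $q_i>4$ and hence $q_2'<\tfrac43<q_1$, so this is a Christ--Kiselev situation with strictly separated time exponents, and $q_1<\infty$ is precisely the attendant non-endpoint hypothesis. In the Lebesgue scale the passage from the full to the retarded operator is then automatic, but here the endpoints carry the Lorentz exponent $2$, and transporting a Christ--Kiselev (equivalently, real-interpolation) argument through the Lorentz scale forces one to spend the gap between $q_1$ and $q_2$ on the \emph{second} exponents. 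Tracking this loss is the heart of the matter and is what produces the dichotomy: when $q_1\le q_2$ one can only afford the weaker input norm $L_t^{q_2'}L_x^{p_2'}$, after which Lorentz nesting ($L_t^{q_1,q_2'}\hookrightarrow L_t^{q_1,2}$, valid since $q_2'<2$) restores the sharp output; whereas when $q_2\le q_1$ one retains the input norm $L_t^{q_2',2}L_x^{p_2'}$ at the price of relaxing the output second exponent to $\tfrac{2q_1}{q_2}\ge 2$. The nesting relations $L^{p,a}\hookrightarrow L^{p,b}$ for $a\le b$ are used throughout to cast the conclusions in the stated form, and the absence of any analogous freedom on the output integrability $p_1$ is the reason the method does not extend the admissible spaces on the left-hand side.
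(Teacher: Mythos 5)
Your factorization of the \emph{non-retarded} operator through $L^2_x$ is correct and standard: composing the homogeneous estimate at level $\theta_1$ with the dual homogeneous estimate at level $\theta_2$ does bound the full operator from $L_t^{q_2',2}L_x^{p_2'}$ into $L_t^{q_1,2}L_x^{p_1}$, and your observation that a one-step dispersive-plus-Young argument fails off-diagonally is also right. The problem is that this non-retarded bound is \emph{stronger} than either conclusion of the proposition, so the entire content of the statement lives in the step you defer: reinstating the cutoff $s<t$. There you invoke an unspecified ``Christ--Kiselev (equivalently, real-interpolation) argument through the Lorentz scale'' and simply \emph{assert} that tracking its loss produces the stated dichotomy. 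No such lemma is proved or cited in a form that applies here: the standard Christ--Kiselev lemma is formulated for Banach-valued Lebesgue spaces $L^a_s(B_1)\to L^b_t(B_2)$, and its Whitney-decomposition proof uses the $\ell^b$-orthogonality of disjointly supported pieces in the output; transporting this to an output space $L_t^{q_1,2}L_x^{p_1}$ and an input space $L_t^{q_2',2}L_x^{p_2'}$ is exactly the delicate bookkeeping you acknowledge as ``the heart of the matter'' and then omit. In particular, nothing in your sketch explains why the second exponent should degrade to precisely $\tfrac{2q_1}{q_2}$ when $q_2\le q_1$, nor why the input must drop to $L_t^{q_2'}$ (rather than some other Lorentz refinement) when $q_1\le q_2$. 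As written, the proof is incomplete at its only nontrivial step.

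The paper's route sidesteps Christ--Kiselev entirely, and you could adopt it: the retarded operator is a single fixed linear operator (kernel $\mathbbm{1}_{s<t}\,e^{-(t-s)\partial_x^3}$), so one may prove its boundedness at an extreme exponent pair directly by duality --- taking $g\in L^1_sL^2_x$ (so that the time-ordering can be discarded after passing absolute values inside) for the case $q_1\le q_2$, or testing against $f\in L^1_tL^2_x$ for the case $q_2\le q_1$ --- and then complex-interpolate against the diagonal retarded estimate of Lemma \ref{prel/strichartz-basic}. The dichotomy and the exponents $q_2'$ and $\tfrac{2q_1}{q_2}$ then fall out of the interpolation identity
$L_t^{q_1,\frac{2q_1}{q_2}} = \big(L_t^\infty, L_t^{q_2,2}\big)_{[q_2/q_1]}$
rather than from any Christ--Kiselev loss. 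Either supply a precise Lorentz-valued Christ--Kiselev lemma with the claimed exponent tracking, or replace that step with the duality-plus-interpolation argument.
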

\begin{proof}
    We mimic the usual proof of non-endpoint Strichartz estimates. Throughout the proof, we estimate over $\R \times \R$; the restriction to the spacetime slab $J \times \R$ can be achieved by replacing $g(s,x)$ by $g(s,x) \mathds{1}_{s \in J}$.

    We first aim to show the $q_2 = \infty, p_2 = 2$ estimate. Arguing by duality, we find
    \begin{align*}
        \bigg\|\int_{t_0}^t |\partial_x|^\frac{\theta_1\alpha}{2} e^{-(t-s)\partial_x^3}&  g(s,\cdot) ds\bigg\|_{L_t^{q_1,2}L_x^{p_1}} \\
        & = \sup\bigg|\bigg\langle\int_{t_0}^t |\partial_x|^\frac{\theta_1\alpha}{2} e^{-(t-s)\partial_x^3} g(s,x) ds,  f(t,x)\bigg\rangle_{t,x}\bigg| \\
        & = \sup\bigg|\int \int_{t_0}^t\Big\langle  e^{s\partial_x^3} g(s,x),|\partial_x|^\frac{\theta_1\alpha}{2}e^{t\partial_x^3}f(t,x)\Big\rangle_{x}dsdt\bigg| \\
        & \leq \sup\big\|e^{s\partial_x^3} g(s,x)\big\|_{L_s^1L_x^2}\bigg\|\int |\partial_x|^\frac{\theta_1\alpha}{2}e^{t\partial_x^3}f(t,x)dt\bigg\|_{L_x^2}.
    \end{align*}
    where the supremum is taken over $\big\{f : \|f\|_{L_t^{q_1',2}L_x^{p_1'}} = 1\big\}$ and $\langle\cdot,\cdot\rangle_*$ is the $L^2$ inner product with subscript indicating the variable(s) over which the inner product is taken.

    By the dual estimate of Lemma \ref{prel/strichartz-basic}, we then find that
    \begin{align*}
        \bigg\|\int_{t_0}^t |\partial_x|^\frac{\theta_1\alpha}{2} e^{-(t-s)\partial_x^3} g(s,\cdot) ds\bigg\|_{L_t^{q_1,2}L_x^{p_1}}
        & \lesssim \sup\|g(s,x)\|_{L_s^1L_x^2}\|f\|_{L_t^{q_1',2}L_x^{p_1'}}\\
        & = \|g(s,x)\|_{L_s^1 L_x^2}.
    \end{align*}
    Interpolating with the estimates from Lemma \ref{prel/strichartz-basic}, this completes the proof of Proposition \ref{prel/strichartz} in the case $q_1 \leq q_2$.

    We now consider $q_1 = \infty$, $p_1 = 2$. Arguing again by duality, we may estimate
    \begin{align*}
        \bigg\|\int_{t_0}^t e^{-(t-s)\partial_x^3} g(s,\cdot) ds\bigg\|_{L_t^\infty L_x^2}
        & = \sup\bigg|\int \int_{t_0}^t\Big\langle  e^{s\partial_x^3} g(s,x),e^{t\partial_x^3}f(t,x)\Big\rangle_{x}dsdt\bigg| \\
        & \leq \sup\big\|e^{t\partial_x^3} f(t,x)\big\|_{L_t^1L_x^2}\bigg\|\int_{t_0}^t e^{s\partial_x^3}g(s,x)ds\bigg\|_{L_x^2} \\
        & \lesssim \Big\||\partial_x|^\frac{-\theta_2\alpha}{2} g(t,x) \Big\|_{L_t^{q_2',2}L_x^{p_2'}}.
    \end{align*}
    where the supremum is taken over $\{f : \|f\|_{L_t^1 L_x^2} = 1\}$. Interpolating with Lemma \ref{prel/strichartz-basic}, this completes the proof of Proposition \ref{prel/strichartz} in the case $q_2 \leq q_1$. Note that the Lorentz exponent $\frac{2q_1}{q_2}$ arises from the observation:
    \begin{equation*}
        L_t^{q_1,\frac{2q_1}{q_2}}\dot{H}_x^{p_1,\frac{\theta_1\alpha}{2}} = \Big( L_t^\infty L_x^2,\;L_t^{q_2,2}\dot{H}_x^{p_2,\frac{\theta_2\alpha}{2}}\Big)_{\left[\frac{q_2}{q_1}\right]},
    \end{equation*}
    in the sense of complex interpolation.
\end{proof}

\section{Spacetime Bounds}
        For the proof of Theorem \ref{theorem}, it will be necessary to control solutions to \eqref{gKdV} in a variety of mixed Lorentz spacetime norms.
    Notably, in the case of $4 \leq k < 8$ in Theorem \ref{spacetime/theorem}, we require that initial data lie in $\dot{H}^\frac{1}{4} \cap \dot{H}^{\frac{1}{4} - \frac{8-k}{4(k-1)}}$, which is not scaling critical. In particular, in the case $k = 4$, we require $u_0$ to lie in a negative regularity space, which introduces its own challenges.

    In this section, we prove the spacetime bounds we require. In doing so, we develop a persistence of negative (and positive) regularity for solutions to \eqref{gKdV}. As the negative regularity spaces introduce additional challenges --- such as the absence of a Leibniz rule --- and are absent from the standard well-posedness theory, we present the full details here.
    \begin{proposition}[Spacetime bounds]\label{spacetime/theorem}
        Let $k \geq 4$, $\theta \in (0,1)$, and $(q,p) = (\frac{4}{\theta},\frac{2}{1-\theta})$.
        Given $s \geq - \frac{5}{6}$ and $u_0 \in \dot{H}^{s_k} \cap \dot{H}^s(\R)$, let $u(t)$ be the corresponding solution to \eqref{gKdV}.
        Then
        \begin{equation}\label{spacetime/theorem 1}
            \big\||\partial_x|^{\frac{\theta}{4} + s} u\big\|_{L_t^{q,2 \vee \frac{q}{3}} L_x^{p}} \leq C\big(\|u_0\|_{\dot{H}^{s_k}}\big) \|u_0\|_{\dot{H}^s}.
        \end{equation}

        Moreover, for all $s \geq -\frac{5}{6}$ and $\theta \in [0,1]$ with $(p,q) = (\frac{4}{\theta}, \frac{2}{1-\theta})$,
        \begin{equation}\label{spacetime/theorem 2}
            \big\||\partial_x|^{1 - \frac{5\theta}{4} + s}u\big\|_{L_x^p L_t^q} \leq C\big(\|u_0\|_{\dot{H}^{s_k}}\big) \|u_0\|_{\dot{H}^s}.
        \end{equation}
    \end{proposition}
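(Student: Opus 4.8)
The plan is to run a continuity (bootstrap) argument built on the Duhamel formula
\[ u(t) = e^{-t\partial_x^3}u_0 \mp \int_0^t e^{-(t-s')\partial_x^3}\partial_x\big(u^{k+1}\big)(s')\,ds', \]
establishing the two families \eqref{spacetime/theorem 1} and \eqref{spacetime/theorem 2} together. The contribution of the linear term $e^{-t\partial_x^3}u_0$ is immediate in both: for \eqref{spacetime/theorem 2} it is Lemma \ref{gKdVL(XT)Norm} with the regularity shifted by $|\partial_x|^s$, and for \eqref{spacetime/theorem 1} it is the homogeneous Lorentz--Strichartz estimate coming from \eqref{prel/dispersive} and the Young--O'Neil inequality (again shifted by $s$); in each case this is bounded by $\|u_0\|_{\dot{H}^s}$, using the Lorentz nesting $L_t^{q,2}\hookrightarrow L_t^{q,2\vee q/3}$ for \eqref{spacetime/theorem 1}. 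All the difficulty lies in the Duhamel term, where the strategy is to place the entire differentiation on a single factor of $u^{k+1}$ --- controlled by one of the very norms being bootstrapped --- and to estimate the remaining $k$ factors in the scaling-critical spacetime norms provided by Theorem \ref{KPV-gKdV-Global-kg4} and, for $k=4$, by Dodson's global bound \eqref{gKdV-Dodson}.

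For \eqref{spacetime/theorem 2} I would apply the retarded estimate of Lemma \ref{gKdVL(XT)Norm} to the Duhamel term: choosing the admissible pair attached to the output norm $L_x^p L_t^q$ and a second admissible pair $(p_2,q_2)$ for the input reduces matters to bounding $\big\||\partial_x|^{s+5/p_2}(u^{k+1})\big\|_{L_x^{p_2'}L_t^{q_2'}}$, where the order $s+5/p_2$ can be kept nonnegative by taking $p_2$ small (which $s\geq-\tfrac56$ permits). One then invokes the fractional Leibniz rule, Lemma \ref{proof/leibniz}, and its endpoint form Proposition \ref{prel/leibniz} at the boundary exponents, to move $|\partial_x|^{s+5/p_2}$ onto one factor --- estimated by another member of the family \eqref{spacetime/theorem 2} --- while the remaining $k$ factors are handled by $\||\partial_x|^{s_k}u\|_{L_x^5 L_t^{10}}$ (combined with the Sobolev embedding Lemma \ref{proof/sobolev} when $k>4$) or by $\|u\|_{L_x^5 L_t^{10}}$ when $k=4$. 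In this way \eqref{spacetime/theorem 2} closes as a self-contained coupled family, mirroring the proof of Theorem \ref{KPV-gKdV-Global-kg4}.

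For \eqref{spacetime/theorem 1} the essential choice is forced by the need to convert a forward mixed norm into the backward norms that control $u$. I would apply Proposition \ref{prel/strichartz} with $\theta_1=\theta$ and $\alpha=\tfrac12$ (so that the output pair is $(q,p)=(\tfrac4\theta,\tfrac2{1-\theta})$ and the prefactor is exactly $|\partial_x|^{\theta/4}$), together with the \emph{symmetric} input choice $\theta_2=\tfrac23$, for which the dual exponents coincide, $q_2'=p_2'=\tfrac65$. The nonlinearity is then measured in the symmetric norm $L^{6/5}_{t,x}=L_x^{6/5}L_t^{6/5}$, a backward norm to which the bounds for $u$ apply, and the estimate reduces to controlling $\big\||\partial_x|^{\frac56+s}(u^{k+1})\big\|_{L_x^{6/5}L_t^{6/5}}$. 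For $k=4$, for instance, the Leibniz rule places $|\partial_x|^{\frac56+s}$ on one factor governed by \eqref{spacetime/theorem 2} in $L_x^{30}L_t^{30/13}$ and leaves four factors in $\|u\|_{L_x^5 L_t^{10}}$, with the H\"older exponents matching exactly. I note also that the two cases $q_1\leq q_2$ and $q_2\leq q_1$ of Proposition \ref{prel/strichartz} reproduce precisely the Lorentz exponent $2\vee\tfrac q3$ appearing in \eqref{spacetime/theorem 1}.

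The main obstacle is the genuinely negative-regularity regime $s<0$ --- in particular $s=-\tfrac1{12}$ for the mass-critical application --- where no Leibniz rule holds at negative differentiation order. The symmetric choice $\theta_2=\tfrac23$ is exactly what circumvents this: it forces the order falling on $u^{k+1}$ to equal $\tfrac56+s$, which is nonnegative precisely when $s\geq-\tfrac56$, so the (endpoint) Leibniz rules do apply; this is the origin of the hypothesis. It then remains to close the bootstrap. Since the critical spacetime norms are globally finite with constant $C\big(\|u_0\|_{\dot{H}^{s_k}}\big)$, I would partition $\R$ into finitely many intervals on which these norms fall below a small threshold $\eta$, so that the nonlinear contribution on each interval carries a factor $\eta^{k}$ and is absorbed into the left-hand side; propagating the $\dot{H}^s$-type bound across the finitely many intervals yields the global estimates, with constant depending on $\|u_0\|_{\dot{H}^{s_k}}$ and linear dependence on $\|u_0\|_{\dot{H}^s}$.
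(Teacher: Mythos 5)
Your proposal follows essentially the same route as the paper: Duhamel plus the backwards/Lorentz--Strichartz estimates with the symmetric dual norm $L^{6/5}_{t,x}$, the fractional Leibniz rule placing $|\partial_x|^{\frac56+s}$ on a single factor (which is exactly where the hypothesis $s\geq-\frac56$ is used), the remaining $k$ factors measured in critical norms made small by an interval decomposition, and a bootstrap; you also correctly identify the origin of the Lorentz exponent $2\vee\frac q3$ from the two cases of Proposition \ref{prel/strichartz}. Two loose ends are worth flagging. First, to absorb the $\eta^k$ term into the left-hand side you must know a priori that the bootstrapped norm is finite; the paper secures this on compact time intervals via persistence of positive regularity and the mixed-norm Sobolev embedding (see \eqref{proof/finite}) before running the continuity argument, a step your write-up omits. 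Second, for $k>4$ the H\"older split forces the $k$ undifferentiated factors into $L_x^{5k/4}L_t^{5k/2}$, and Lemma \ref{proof/sobolev} cannot produce this from $\||\partial_x|^{s_k}u\|_{L_x^5L_t^{10}}$ since it only trades spatial integrability for spatial derivatives while leaving the time exponent fixed; the paper instead controls $\|u\|_{L_x^{5k/4}L_t^{5k/2}}$ by the space--time derivative norm \eqref{KPV-gKdV-Global-kg4-A3}. Structurally, the paper first closes the bootstrap for the single norm $\||\partial_x|^{s+\frac56}u\|_{L_x^{30}L_t^{30/13}}$ and only then reads off the whole family, rather than coupling all norms at once, but this difference is cosmetic.
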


\begin{proof}
    We first focus on \eqref{spacetime/theorem 2} for $p = 30$, $q = \frac{30}{13}$, and $\theta = \frac{2}{3}$.
    By the density of Schwartz functions\footnote{In the case $s < -\frac{1}{2}$, we must also restrict to Schwartz initial data which vanish at $\xi = 0$.} in $\dot{H}^{s_k} \cap \dot{H}^s$, it suffices to consider Schwartz initial data $u_0$.
    
    To close our bootstrap argument, we first show that this norm is finite over any compact time interval $|K| < \infty$. By the backwards mixed-norm Sobolev embedding (Lemma \ref{proof/sobolev}) and persistence of positive regularity \cite{KPV93}, we find that for all $|K| < \infty$,
    \begin{align}\label{proof/finite}
        \hspace{-6pt}\big\||\partial_x|^{s + \frac{5}{6}}u \big\|_{L_x^{30}L_t^{\frac{30}{13}}(K)} & \lesssim \big\||\partial_x|^{s + 1} u\big\|_{L_x^5L_t^{\frac{30}{13}}(K)} \lesssim \big\||\partial_x|^{s + 1} u\big\|_{L_x^5L_t^{10}(K)}|K|^{\frac{1}{3}} < \infty.
    \end{align}
    We note that this estimate relies on higher regularity norms of $u_0$. As our resulting spacetime bounds depend only on the $\dot{H}^{s_k} \cap \dot{H}^s$ norm of $u_0$, this is inconsequential.

    We now proceed via a bootstrap argument. Fix a small parameter $\eta > 0$ to be chosen later based on absolute constants.
    By \eqref{gKdV-Dodson} for $k = 4$ and \eqref{KPV-gKdV-Global-kg4-A3} for $k > 4$, we may decompose $\R$ into $J(\|u_0\|_{\dot{H}^{s_k}},\eta)$-many (potentially unbounded) intervals $I_j$ on which
    \begin{equation*}
        \|u\|_{L_x^\frac{5k}{4} L_t^\frac{5k}{2}(I_j)} \lesssim \||\partial_x|^{\frac{1}{10}-\frac{2}{5k}}|\partial_t|^{\frac{3}{10}-\frac{6}{5k}}u\big\|_{L^{p_k}_{x}L^{q_k}_{t}(I_j)} < \eta,
    \end{equation*}
    where $\frac{1}{p_k}=\frac{2}{5k}+\frac{1}{10}$, $\frac{1}{q_k}=\frac{3}{10}-\frac{4}{5k}$.

    Fix a spacetime slab $I_j\times \R$ and some compact time interval $K \subset \R$. The backwards Strichartz inequality (Lemma \ref{gKdVL(XT)Norm}) followed by the fractional Leibniz rule (Lemma \ref{proof/leibniz}) then imply that for any $t_j \in I_j \cap K$,
    \begin{equation}\begin{split}\label{proof/1}
        \big\||\partial_x|^{s + \frac{5}{6}} u\big\|_{L_x^{30}L_t^{\frac{30}{13}}(K \cap I_j)}
        & \lesssim \big\| |\partial_x|^{s} u(t_j)\big\|_{L^2} + \big\| |\partial_x|^{s + \frac{5}{6}} u^{k+1}\big\|_{L_{x,t}^{6/5}(K \cap I_j)} \\
        & \lesssim \big\| |\partial_x|^s u(t_j)\big\|_{L^2} + \eta^k\big\||\partial_x|^{s + \frac{5}{6}}u \big\|_{L_x^{30} L_t^{\frac{30}{13}}(K \cap I_j)}.
    \end{split}\end{equation}
    We note that the fractional Leibniz rule is only valid because $s + \frac{5}{6} \geq 0$.

    Noting that $|K|<\infty$ and choosing $\eta$ sufficiently small relative to the constants in \eqref{proof/1}, \eqref{proof/finite} and a standard bootstrap argument imply that
    \begin{equation*}
        \big\||\partial_x|^{s + \frac{5}{6}} u\big\|_{L_x^{30}L_t^{\frac{30}{13}}(K \cap I_j)} \lesssim \big\| |\partial_x|^s u(t_j)\big\|_{L^2},
    \end{equation*}
    uniformly for $|K| < \infty$. Supremizing over $|K| < \infty$, we then find that
    \begin{equation*}
        \big\||\partial_x|^{s + \frac{5}{6}} u\big\|_{L_x^{30}L_t^{\frac{30}{13}}(I_j)} \lesssim \big\| |\partial_x|^s u(t_j)\big\|_{L^2}
    \end{equation*}
    for any $t_j \in I_j$.

    We now extend this to all norms present in Proposition \ref{spacetime/theorem}. Define $\mathcal{S}$ such that
    \begin{equation*}
        \|\cdot\|_{\mathcal{S}} = \big\||\partial_x|^{\frac{\theta}{4}} \cdot\big\|_{L_t^{q,\phi} L_x^{p}} \quad\text{or}\quad \|\cdot\|_{\mathcal{S}} = \big\||\partial_x|^{1 - \frac{5\theta}{4}}\cdot\big\|_{L_x^p L_t^q}
    \end{equation*}
    for $\theta, p, q, \phi$ as in Proposition \ref{spacetime/theorem}.
    For all $\mathcal{S}$, the Strichartz estimates, Proposition \ref{prel/strichartz} and Lemma \ref{gKdVL(XT)Norm}, along with \eqref{KPV-gKdV-Global-kg4-A3} then imply that
    \begin{align}
        \big\||\partial_x|^{s} u\big\|_{\mathcal{S}(I_j)} & \lesssim \big\| |\partial_x|^s u(t_j)\big\|_{L^2} + \big\| |\partial_x|^{s + \frac{5}{6}} u^{k+1}\big\|_{L_{t,x}^{\frac{6}{5}}(I_j)} \label{proof/8}\\
        & \lesssim \big\||\partial_x|^s u(t_j)\big\|_{L^2} + \|u\|^k_{L_x^\frac{5k}{4} L_t^\frac{5k}{2}(I_j)}\big\| |\partial_x|^{s + \frac{5}{6}} u\big\|_{L_x^{30} L_t^{\frac{30}{13}}(I_j)} \nonumber \\
        & \leq C\big(\|u_0\|_{L^2}\big)\big\| |\partial_x|^s u(t_j)\big\|_{L^2}. \nonumber
    \end{align}

    Using the $\mathcal{S} = L_t^\infty \dot{H}^s_x$ estimate, an iterative argument with appropriately chosen $t_j$ then implies that
    \begin{equation*}
        \big\||\partial_x|^{s} u\big\|_{\mathcal{S}(I_j)} \leq C\big(\|u_0\|_{\dot{H}^{s_k}},j\big)\big\| |\partial_x|^s u_0\big\|_{L^2}.
    \end{equation*}
    Summing over $j = 1,\dots,J(\|u_0\|_{\dot{H}^{s_k}})$, this concludes the proof of Proposition \ref{spacetime/theorem}.
\end{proof}

\section{Proof of dispersive decay}
    In this section, we prove Theorem \ref{theorem} and establish dispersive decay for solutions to \eqref{gKdV} for $k \geq 4$. In the case $k \geq 8$, this result is scaling-critical and will follow a proof similar to that of \cite{FKVZ24,Kowal24}. In the case $4 \leq k < 8$, adaptations are necessary to account for the lack of scaling-criticality.
\begin{proof}
    The proof for $4 \leq k < 8$ and $k \geq 8$ will follow identical structures with the only difference coming from the spaces used. To unify the arguments, let $\mathcal{H} = \dot{H}^{\frac{1}{4}} \cap \dot{H}^{\frac{1}{4} - \frac{8-k}{4(k-1)}}$ for $4 \leq k < 8$ and let $\mathcal{H} = \dot{H}^{s_k}$ for $k \geq 8$.

    It suffices to work with $t > 0$ as $t < 0$ will follow from time-reversal symmetry. By the density of Schwartz functions in $L^1 \cap \mathcal{H}$, it suffices to consider Schwartz solutions of \eqref{gKdV}.

    For $0 < T \leq \infty$, we define the norm
    \begin{equation*}
        \|u\|_{X(T)} = \sup_{t \in [0,T)} |t|^{\frac{1}{3}}\|u(t)\|_{L_x^\infty}.
    \end{equation*}
    It then suffices to show
    \begin{equation}\label{proof/conclusion}
        \|u\|_{X(\infty)} \leq C(\|u_0\|_{\mathcal{H}})\|u_0\|_{L^{1}},
    \end{equation}
    for which we proceed with a bootstrap argument.
    
    Let $\eta > 0$ denote a small parameter to be chosen later. For ease of notation, let $\theta = 1 \wedge \frac{8}{k}$. Define $q_\theta = \frac{4(k-1)}{2-\theta}$, $ r_\theta = \frac{4(k-1)}{4 - \theta}$, and $p_\theta = \frac{2(k-1)}{1 + \theta}$. We note that $r_\theta \geq \frac{q_\theta}{3} = \frac{4(k-1)}{3(2-\theta)}$. By the nesting of Lorentz spaces and Proposition \ref{spacetime/theorem}, we may then estimate
    \begin{align*}
        \|u\|_{L_t^{q_\theta,r_\theta}L_x^{p_\theta}} 
        & \lesssim \Big\||\partial_x|^{\frac{k-4}{2(k-1)}}u \Big\|_{L_t^{\frac{4(k-1)}{2-\theta}, \frac{4(k-1)}{3(2-\theta)}} L_x^{\frac{2(k-1)}{k+\theta-3}}}\\
        & \leq C\big(\|u_0\|_{\dot{H}^{s_k}}\big)\Big\||\partial_x|^{s_k - \frac{8-k\theta}{4k(k-1)}} u_0\Big\|_{L^2} = C(\|u_0\|_{\mathcal{H}}).
    \end{align*}
     This implies that we may decompose $[0,\infty)$ into $J(\|u_0\|_{\mathcal{H}},\eta)$ many intervals $I_j = [T_{j-1},T_j)$ on which
    \begin{equation}\label{proof/smallness}
        \big\||\partial_x|^{\frac{1}{2}}u\big\|_{L_t^\frac{4}{\theta} L_x^\frac{2}{1-\theta}(I_j)}, \|u\|_{L_t^{q_\theta,r_\theta}L_x^{p_\theta}(I_j)} < \eta.
    \end{equation}

    We aim to show that for all $j = 1,\dots, J(\|u_0\|_{\mathcal{H}}, \eta)$,
    \begin{equation}\label{proof/bootstrap}
        \|u\|_{X(T_j)} \lesssim \|u_0\|_{L^1} + C(\|u_0\|_{\mathcal{H}})\|u\|_{X(T_{j-1})} + \eta^k \|u\|_{X(T_j)}.
    \end{equation}
    Choosing $\eta > 0$ sufficiently small based on the constants in \eqref{proof/bootstrap}, we could then iterate over all indices $j = 1,\dots, J(\|u_0\|_{\mathcal{H}})$ to yield \eqref{proof/conclusion} and conclude the proof of Theorem \ref{theorem}.

    We therefore focus on \eqref{proof/bootstrap}. Fix $t \in [0,T_j)$ and recall the Duhamel formula:
    \begin{equation*}
        u(t) = e^{-t\partial_x^3} u_0 \mp i \int_0^{t} e^{-(t-s)\partial_x^3}\partial_x u^{k+1}(s) ds.
    \end{equation*}
    By the dispersive estimate \eqref{prel/dispersive} with $\alpha = 0$, the contribution of the linear term to $\|u(t)\|_{X(T_j)}$ is immediately seen to be acceptable:
    \begin{equation*}
        \big\|e^{-t\partial_x^3} u_0\big\|_{X(T_j)} \lesssim \|u_0\|_{L^1}.
    \end{equation*}
    We thus focus on the nonlinear correction.

    By the adapted dispersive estimate \eqref{prel/dispersive-adapted}, we find that
    \begin{align}\label{proof/failure 0}
         \bigg\|\int_0^t e^{-(t-s)\partial^3_x}\partial_x u^{k+1}(s)ds\bigg\|_{L_x^\infty}
         \lesssim \int_0^t |t-s|^{-\frac{1}{2}}\big\||\partial_x|^{\frac{1}{2}}u^{k+1}(s)\big\|_{L_x^1}ds.
    \end{align}
    Applying the fractional Leibniz rule \eqref{prel/leibniz}, we may then estimate
    \begin{equation}\label{proof/failure 1}
         \bigg\|\int_0^t e^{-(t-s)\partial^3_x}u^{k+1}(s)ds\bigg\|_{L_x^\infty}
         \lesssim \int_0^t |t-s|^{-\frac{1}{2}}\|u(s)\|_{L_x^\infty} \|u(s)\|_{L_x^{p_\theta}}^{k-1} \big\||\partial_x|^{\frac{1}{2}}u(s)\big\|_{L_x^\frac{2}{1-\theta}}ds.
    \end{equation}
    We note that in the case of $4 \leq k \leq 8$ (equiv.~$\theta = 1$), this requires the endpoint fractional Leibniz rule, Proposition \ref{prel/leibniz}, to be used twice as we begin with $L_x^1$ and estimate $|\partial_x|^{\frac{1}{2}}u$ in $L_x^\infty$; see, e.g., \eqref{prel/endpoint product}.

    By definition, $|s|^{\frac{1}{3}}\|u(s)\|_{L^\infty} \leq \|u\|_{X(s)}$. Then
    \begin{equation}\label{proof/failure 2}
        \begin{split}
        \bigg\|\int_0^t e^{-(t-s)\partial^3_x}& u^{k+1}(s)ds\bigg\|_{L_x^\infty} \\
         & \lesssim \int_0^t |t-s|^{-\frac{1}{2}}|s|^{-\frac{1}{3}}\|u\|_{X(s)} \|u(s)\|_{L_x^{p_\theta}}^{k-1} \big\||\partial_x|^{\frac{1}{2}}u(s)\big\|_{L_x^\frac{2}{1-\theta}}ds.
         \end{split}
    \end{equation}

    We now decompose $ [0,t) $ into $[0,\frac{t}{2})$ and $[\frac{t}{2},t)$. For $s \in  [0,\frac{t}{2}) $, we note that $|t-s| \sim |t|$, and for $s \in [\frac{t}{2},t)$, we note that $|s| \sim |t|$. We may then estimate
    \begin{equation*}\begin{split}
        \bigg\|\int_0^t e^{-(t-s)\partial^3_x}& u^{k+1}(s)ds\bigg\|_{L_x^\infty}\\
         & \lesssim |t|^{-\frac{1}{3}}\int_0^{\frac{t}{2}} |t-s|^{-\frac{1}{6}}|s|^{-\frac{1}{3}}\|u\|_{X(s)} \|u(s)\|_{L_x^{p_\theta}}^{k-1} \big\||\partial_x|^{\frac{1}{2}}u(s)\big\|_{L_x^\frac{2}{1-\theta}}ds \\
         & \hspace{11pt} + |t|^{-\frac{1}{3}}\int_{\frac{t}{2}}^t |t-s|^{-\frac{1}{2}}\|u\|_{X(s)} \|u(s)\|_{L_x^{p_\theta}}^{k-1} \big\||\partial_x|^{\frac{1}{2}}u(s)\big\|_{L_x^\frac{2}{1-\theta}}ds.
    \end{split}\end{equation*}
    By H\"older's inequality, we may place $|t-s|^{-\frac{1}{2}}$ into $L_s^{2,\infty}$, $|s|^{-\frac{1}{3}}$ into $L_s^{3,\infty}$ and $|t-s|^{-\frac{1}{6}}$ into $L_s^{6,\infty}$. Noting that $[0,t) \subset [0,T_j)$, this then implies that
    \begin{align*}
        \bigg\|\int_0^t e^{-(t-s)\partial^3_x}u^{k+1}(s)ds\bigg\|_{L_x^\infty}\hspace{-4pt}
         & \lesssim |t|^{-\frac{1}{3}}\Big\|\|u\|_{X(s)} \|u(s)\|_{L_x^{p_\theta}}^{k-1} \big\||\partial_x|^{\frac{1}{2}}u(s)\big\|_{L_x^\frac{2}{1-\theta}}\Big\|_{L_s^{2,1}([0,T_j))}.
    \end{align*}
    Here we note the necessity of the Lorentz spacetime bounds to control the singularity of the time decay at $s = 0$ and $t = s$.

    We now decompose $[0,T_j)$ into $[0,T_{j-1}) \cup I_j$ to find
    \begin{equation}\begin{split}\label{proof/2}
        \bigg\|\int_0^t e^{-(t-s)\partial^3_x}& u^{k+1}(s)ds \bigg\|_{L_x^\infty} \\
         & \lesssim |t|^{-\frac{1}{3}}\|u\|_{X(T_{j-1})}\|u\|^{k-1}_{L_t^{q_\theta,r_\theta}L_x^{p_\theta}}\big\||\partial_x|^{\frac{1}{2}}u\big\|_{L_t^\frac{4}{\theta} L_x^\frac{2}{1-\theta}} \\
         & \hspace{11pt} + |t|^{-\frac{1}{3}}\|u\|_{X(T_{j})}\|u\|^{k-1}_{L_t^{q_\theta,r_\theta}L_x^{p_\theta}(I_j)}\big\||\partial_x|^{\frac{1}{2}}u\big\|_{L_t^\frac{4}{\theta} L_x^\frac{2}{1-\theta}(I_j)}.
    \end{split}\end{equation}
    Supremizing over $t \in [0,T_j)$, Proposition \ref{spacetime/theorem} and \eqref{proof/smallness} then imply that
    \begin{align*}
        \bigg\|\int_0^t e^{-(t-s)\partial^3_x}u^{k+1}(s)ds\bigg\|_{X(T_j)}
         & \leq C\big(\|u_0\|_{\mathcal{H}}\big)\|u\|_{X(T_{j-1})} + \eta^k\|u\|_{X(T_{j})}.
    \end{align*}
    Combined with the linear estimate, this yields the bootstrap statement \eqref{proof/bootstrap} and concludes the proof of Theorem \ref{theorem}.
\end{proof}

\bibliographystyle{abbrv}
\bibliography{references}
\end{document}